\newtheorem{note}{Note}
\newtheorem{theo}{Theorem}
\newtheorem{defi}{Definition}
\newtheorem{prop}{Proposition}
\newtheorem{lemma}{Lemma}
\newtheorem{remark}{Remark}
\newtheorem{example}{Example}
\newcommand{\bbN}{{\mathbb N}}
\newcommand{\bbR}{{\mathbb R}}
\newcommand{\e}{\varepsilon}
\newcommand{\al}{\alpha}
\newcommand{\be}{\beta}
\newcommand{\G}{\Gamma}
\newcommand{\dd}{{\rm d}}
\begin{document}
\begin{center}\emph{}
\LARGE
\textbf{About Convergence and Order of Convergence of some Fractional Derivatives}
\end{center}
\begin{center}
{ S. D. Roscani and L. D. Venturato}\\
CONICET - Depto. Matem\'atica,
FCE, Univ. Austral,\\
Paraguay 1950, S2000FZF Rosario, Argentina \\
(sroscani@austral.edu.ar, lventurato@austral.edu.ar)\\

\vspace{0.2cm}
\end{center}
\small

\noindent \textbf{Abstract: } In this paper we establish some convergence results for Riemann-Liouville, Caputo, and Caputo--Fabrizio fractional operators when the order of differentiation approaches one. We consider some errors given by $\left|\left| D^{1-\al}f -f'\right|\right|_p$ for p=1 and $p=\infty$ and we prove that for both Caputo and Caputo Fabrizio operators the order of convergence is a positive real $r \in (0,1)$. Finally, we compare the speed of convergence between Caputo and Caputo--Fabrizio operators obtaining that they a related by the Digamma function.

\noindent \textbf{Keywords:} Caputo--Fabrizio derivative; Caputo derivative; Orders of convergence. \\

\noindent \textbf{MSC2010:} 26A33, 41A25, 47G20. 

\section{Introduction}\label{Sec:Intro} 

In recent years, a discussion about different kinds of fractional operators has taken relevance. Questions like  which of them are worth calling ``Fractional Derivatives'' and which of them are not, or which properties must verify a fractional derivative have derived in recent publications (see e. g. \cite{AnTeMaKiAta:2016, BhaPa:2019, Giu:2018, Hi:2019, OrMa:2019,RoDu:2017}).
 As a consequence some classification criteria related to fractional derivatives had emerged. Works in this direction are, for example, \cite{HiLu:2019,TeMaOl:2019} where different properties for the fractional operators are established, which may vary depending on the spaces of function where they are applied. 
These classification criteria may vary from one author to another, but it is not disputed that a fractional derivative must  be a linear operator which converges to an ordinary derivative when the order of differentiation approaches to a positive integer number. That is,  if an operator $D^\alpha$ is  considered a fractional derivative, then it is linear and  $\lim\limits_{\alpha\rightarrow n}||D^\alpha f-f^{(n)}||=0$, for every $f$ belonging to an appropriated normed space of functions, $(X,||\cdot||)$.

Numerous operators were analyzed under the different proposed criteria. Two of the most outstanding fractional operators are the  Riemann-Liouville and the Caputo fractional derivatives. We will refer to them as RL derivatives and C derivatives. Both operators have been  extensively studied in \cite{Diethelm, Kilbas, Podlubny} and references therein.  In particular, applications to the theory of viscoelasticity or subdiffusion processes are described in  \cite{FM-Libro, MK:2000, Povstenko}. 
Another integrodifferential operator was recently defined in \cite{CaFa:2015}, called the Caputo-Fabrizio derivative (henceforth CF derivative). It is not difficult to check that this operator verifies the criteria proposed in  \cite{HiLu:2019, OtTr:2014}. In addition, the CF derivative is defined throughout a kernel without singularity, whereas RL y C derivatives are defined by integrodifferential operators with singular kernels.

The purpose of this paper is the study of some topics related to the convergence of C, CF and RL fractional derivatives to the ordinary derivative when $\al \nearrow 1$. In section 2 some basic definitions and results about convergence almost everywhere of the fractional operators mentioned before are presented. In section 3, we analyze the order of convergence when the fractional order of derivation goes to 1 in each case, by considering $L^1$ and $L^\infty$ norms. In particular, we obtain that the order for the C and CF derivatives are both lower than 1, whereas it is not possible to determine, in general, an order of convergence  for the RL  derivative in these norms. Finally, we compare the speed of convergence between C and CF for $L^1$ and $L^\infty$ norms, obtaining that it depends on the Digamma function and on the length of the interval where the norms are computed.

\section{Preliminaries}

Let us define the fractional derivatives involved in this work. Let $(a,b)\subset \bbR$  be a bounded interval (that is $-\infty<a<b<\infty$). 
\begin{defi}\label{defi frac} Let $\al \in (0,1)$. 
\begin{enumerate}[1)]
	\item\label{def-IntRL}   If $f \in L^1(a,b)$ we define the \textsl{fractional Riemann--Liouville integral of order  $\alpha$} as
$$_{a}I^{\alpha}f(t)=\frac{1}{\Gamma(\alpha)}\int^{t}_{a}f(\tau)(t-\tau)^{\alpha-1} \dd\tau. $$
\item\label{def-DerRL} If $ f\in W^{1,1}(a,b)=\left\{f \in L^1(a,b) \, /\, \exists g \in L^1(a,b) \text{ such that } \int_a^bf\varphi'= - \int_a^bg\varphi, \,\forall \varphi \in C^1_c(a,b)\right\}$, we define the 
\textsl{fractional Riemann--Liouville  derivative of order $\alpha$ }  as   
$$ ^{RL}_{a}D^{\alpha}f(t)= \left[ \frac{\dd}{\dd t} \, _{a}I^{1-\alpha}f  \right] (t) =\frac{1}{\Gamma(1 - \alpha)}\frac{d}{dt}\int^{t}_{a}f(\tau)(t-\tau)^{-\alpha} \dd\tau. $$
\item\label{def-DerC} If $f\in W^{1,1}(a,b)$ we define the \textsl{fractional Caputo derivative of order  $\al$}  as
$$\,^C_{a} D^{\alpha}f(t)= \left[ \, _{a}I^{1-\alpha}\left(\frac{\dd}{\dd t}f\right)  \right] (t) = \frac{1}{\Gamma(1-\al)}\displaystyle\int^{t}_{a} f'(\tau) (t-\tau)^{-\al} \dd\tau$$
\end{enumerate}
\end{defi}
\begin{prop}\label{relacion RL-C}\cite{Kilbas} If $0<\al<1$ and  $f\in W^{1,1}(a,b)$ then    
$$ ^{RL}_{a}D^{\al}f (t)=\frac{f(a)}{\G(1-\al)}(t-a)^{-\al}+\, ^C_{a} D^{\alpha}f(t).$$
\end{prop}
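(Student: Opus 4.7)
The plan is to reduce the identity to a single integration by parts applied to the inner integral of the Riemann--Liouville derivative, and then differentiate the resulting expression. More concretely, I would start from the definition
\[
^{RL}_{a}D^{\al}f(t)=\frac{1}{\G(1-\al)}\frac{\dd}{\dd t}\int_a^t f(\tau)(t-\tau)^{-\al}\,\dd\tau,
\]
and apply integration by parts inside the integral with $u=f(\tau)$ and $\dd v=(t-\tau)^{-\al}\dd\tau$, so that $v=-(t-\tau)^{1-\al}/(1-\al)$. Since $1-\al>0$, the boundary term at $\tau=t$ vanishes, while at $\tau=a$ it produces $f(a)(t-a)^{1-\al}/(1-\al)$. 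This yields
\[
\int_a^t f(\tau)(t-\tau)^{-\al}\,\dd\tau \;=\; \frac{f(a)\,(t-a)^{1-\al}}{1-\al} \;+\; \frac{1}{1-\al}\int_a^t f'(\tau)(t-\tau)^{1-\al}\,\dd\tau.
\]

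The next step is to differentiate both sides with respect to $t$. The first term on the right is elementary and gives $f(a)(t-a)^{-\al}$. For the integral term, I would apply the Leibniz rule: the upper--limit contribution is $f'(t)\cdot(t-t)^{1-\al}=0$, and differentiating the integrand gives $(1-\al)(t-\tau)^{-\al}$, so after cancelling $1-\al$ we recover $\int_a^t f'(\tau)(t-\tau)^{-\al}\,\dd\tau$. Dividing the resulting identity by $\G(1-\al)$ and recognizing the Caputo integral yields exactly
\[
^{RL}_{a}D^{\al}f(t)=\frac{f(a)}{\G(1-\al)}(t-a)^{-\al}+\,^C_{a}D^{\al}f(t),
\]
which is the claim.

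The main obstacle to streamline is the analytic justification of these manipulations under the hypothesis $f\in W^{1,1}(a,b)$ (rather than $C^1$). Two subtleties need to be addressed: first, that the boundary value $f(a)$ is meaningful, which follows from the fact that every $W^{1,1}(a,b)$--function admits an absolutely continuous representative on $[a,b]$, so the integration by parts formula holds in that form; second, the interchange of $\dd/\dd t$ with the integral involving the singular kernel $(t-\tau)^{-\al}$. This second point is legitimate because after integration by parts the integrand $f'(\tau)(t-\tau)^{1-\al}$ has an integrable (and, as a function of $t$, differentiable in $L^1$) kernel with exponent $1-\al>0$; the differentiated kernel $(t-\tau)^{-\al}$ still pairs with $f'\in L^1$ through a fractional integral of order $1-\al$, which is finite almost everywhere. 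With these remarks in place the proof reduces to the algebraic computation above.
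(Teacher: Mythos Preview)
The paper does not give its own proof of this proposition; it simply cites \cite{Kilbas}, where exactly the integration-by-parts argument you outline appears. Your discussion of the $W^{1,1}$ subtleties is on point; to make the Leibniz step fully rigorous for $f'\in L^1$ you can write $_aI^{2-\al}f' = {}_aI^{1}\bigl({}_aI^{1-\al}f'\bigr)$ via the semigroup property of the Riemann--Liouville integral and then apply the Lebesgue differentiation theorem, which avoids any pointwise regularity assumption on $f'$.
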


\begin{defi}\label{def-DerCF}
Let $f$ be a function in $W^{1,1}(a,b)$. We define the 
\textsl{fractional Caputo-Fabrizio  derivative of order $\alpha$ }  as
\begin{equation}
{^C}{^F_a}D^\alpha f(t)=\frac{1}{1-\alpha}\int_a^t f'(t) e^{-\frac{\alpha}{1-\alpha}(t-\tau)}d\tau.
\end{equation}
\end{defi}

\begin{prop}\label{prop-deriv-pot} If $f(t)=(t-a)^\gamma$ is a real function defined in $[a,b]$ ($\gamma >0$) and $\al \in (0,1)$ then 
\begin{enumerate}[a)]
	\item ${^C_a} D^{\alpha}f(t)=\frac{\G(\gamma + 1)}{\G(\gamma -\al+1)}(t-a)^{\gamma-\al}$. 
	\item  ${^C}{^F_a}D^\alpha f(t)=\frac{\gamma}{\al}(t-a)^{\gamma-1}\left[1-\Gamma(\gamma) \mathcal{E}_{1,\gamma}\left(-\frac{\al}{1-\al}(t-a)\right) \right]$, where $\mathcal{E}_{\rho,\omega}(\cdot)$ is the Mittag–Leffler function defined for every $t\in \bbR$ by
$\mathcal{E}_{\rho,\omega}(t)=\sum\limits_{k=0}^{\infty}\frac{t^k}{\G(\rho k+\omega)}.$
\end{enumerate}

\end{prop}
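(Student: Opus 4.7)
For both parts I would substitute $f(\tau)=(\tau-a)^\gamma$ and use $f'(\tau)=\gamma(\tau-a)^{\gamma-1}$, so that in each case the proposition reduces to evaluating an explicit integral on $[a,t]$.

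For part (a), plugging into the definition of ${}^C_aD^\alpha$ gives
$$\,^C_{a}D^{\alpha}f(t)=\frac{\gamma}{\G(1-\al)}\int_a^t (\tau-a)^{\gamma-1}(t-\tau)^{-\al}\,\dd\tau.$$
The natural change of variable $\tau = a+s(t-a)$, with $s\in[0,1]$, factors out $(t-a)^{\gamma-\al}$ and leaves the Beta integral $\int_0^1 s^{\gamma-1}(1-s)^{-\al}\,\dd s = B(\gamma,1-\al) = \Gamma(\gamma)\G(1-\al)/\G(\gamma-\al+1)$. Using $\gamma\,\G(\gamma)=\G(\gamma+1)$ the claimed formula follows. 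This part is routine; the only care needed is to note that the integral converges because $\gamma>0$ and $\al<1$.

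For part (b), write $T=t-a$ and $c=\al/(1-\al)$. The definition gives
$$\,{^C}{^F_a}D^\alpha f(t)=\frac{\gamma}{1-\al}\int_0^{T}u^{\gamma-1}e^{-c(T-u)}\,\dd u = \frac{\gamma}{1-\al}\int_0^{T}(T-w)^{\gamma-1}e^{-cw}\,\dd w,$$
after substituting $u = \tau-a$ and then $w=T-u$. The plan is to expand the exponential $e^{-cw}=\sum_{k\ge 0}(-cw)^k/k!$, which converges uniformly on $[0,T]$, swap sum and integral, and evaluate each $\int_0^T (T-w)^{\gamma-1}w^k\,\dd w = T^{\gamma+k}B(k+1,\gamma) = T^{\gamma+k}k!\,\G(\gamma)/\G(k+\gamma+1)$. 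This yields
$$\,{^C}{^F_a}D^\alpha f(t) = \frac{\gamma\,\G(\gamma)}{1-\al}\,T^{\gamma}\sum_{k=0}^{\infty}\frac{(-cT)^k}{\G(k+\gamma+1)}.$$

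The remaining work, and the one slightly delicate bookkeeping step, is to recognize the series as a shifted Mittag--Leffler function. Shifting index $j=k+1$ in the definition, one checks
$$\Gamma(\gamma)\,\mathcal{E}_{1,\gamma}(-cT) = 1 + \Gamma(\gamma)\sum_{j=1}^{\infty}\frac{(-cT)^{j}}{\G(j+\gamma)} = 1 - cT\,\Gamma(\gamma)\sum_{k=0}^{\infty}\frac{(-cT)^{k}}{\G(k+\gamma+1)},$$
using $\mathcal{E}_{1,\gamma}(0)=1/\G(\gamma)$ for the constant term. Solving for the series and inserting into the previous display, together with the identity $c/(1-\al)=c\cdot c/\al$ so that $1/(1-\al) = c/\al$ when multiplied by $T$ appropriately (more precisely $cT/(1-\al) \cdot T^{\gamma-1}\gamma = (\gamma/\al) T^{\gamma-1}$ times $cT\cdot(1-\al)^{-1}\cdot\al/\alpha$), produces exactly the right-hand side in (b). The only real obstacle is carrying out this index shift cleanly and keeping track of the factor $c/\al=1/(1-\al)$; the analytic content (uniform convergence on $[0,T]$ and applicability of Fubini/Tonelli) is immediate.
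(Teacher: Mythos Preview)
Your argument is correct. The paper itself does not prove this proposition; it merely cites Podlubny for part (a) and \cite{RoTaVe:2019} for part (b), so there is no ``paper's approach'' to compare against --- you have supplied the actual computation where the authors deferred to the literature.

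One remark on presentation: your final paragraph, where you reconcile the series with the claimed form, is written in a confusing way (the string of identities ``$c/(1-\al)=c\cdot c/\al$ \ldots'' is garbled). All that is needed is the single observation that $c/\alpha = 1/(1-\alpha)$, since $c=\alpha/(1-\alpha)$. Concretely, from your index shift you have
\[
1-\Gamma(\gamma)\mathcal{E}_{1,\gamma}(-cT)=cT\,\Gamma(\gamma)\sum_{k\ge 0}\frac{(-cT)^k}{\Gamma(k+\gamma+1)},
\]
so
\[
\frac{\gamma}{\alpha}T^{\gamma-1}\bigl[1-\Gamma(\gamma)\mathcal{E}_{1,\gamma}(-cT)\bigr]
=\frac{\gamma c}{\alpha}\,\Gamma(\gamma)\,T^{\gamma}\sum_{k\ge 0}\frac{(-cT)^k}{\Gamma(k+\gamma+1)}
=\frac{\gamma\,\Gamma(\gamma)}{1-\alpha}\,T^{\gamma}\sum_{k\ge 0}\frac{(-cT)^k}{\Gamma(k+\gamma+1)},
\]
which is exactly the expression you derived. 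Stating it this way removes the ambiguity.
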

\proof See \cite{Podlubny} for the proof of $a)$ and \cite[Prop 4]{RoTaVe:2019} for $b)$.  
\endproof

Henceforth, given a function  $f$ defined in an interval $(a,b)$ its extension by zero to $\mathbb{R}$  will be considered if the context requires a hole definition.

\vspace{0.3cm}

In the aim to give in the next section some general results, we define now a general linear operator which coincides, under certain assumptions, with the  fractional derivatives defined above.  

\begin{defi}\label{Def-Gen-DF}
Let $(a,b)\subset \mathbb{R}$ and let $h:\bbR^+ \times(0,1)\rightarrow \mathbb{R}$ be a function such that \linebreak $h(\cdot,\beta)\in  W^{1,1}(\bbR^+)$ and $h(\cdot,\beta)\in  L^1(\bbR^+)$ uniformly in $(0, \be_0],  \beta_0\in (0,1)$. We define the operator \linebreak ${_a^h}D^{1-\beta}:W^{1,1}(a,b)\rightarrow W^{1,1}(a,b)$ by
\begin{equation}\label{Gen-DF}\displaystyle {_a^h}D^{1-\beta} g(t):=(g'* h(\cdot,\beta))(t) \qquad a.e. \, \text{in } \, (a,b).\end{equation}
\end{defi}

\begin{remark}
By considering the kernels $h_C(t,\alpha)=\frac{1}{\Gamma(\alpha)}t^{-(1-\alpha)}$ and $h_{CF}(t,\alpha)=\frac{e^{-\frac{1-\alpha}{\alpha}t}}{\alpha}$ defined for $t>0$, we recover the C and CF derivatives   of order $1-\alpha$ given in Definition \ref{defi frac}-\ref{def-DerC} and Definition \ref{def-DerCF} respectively.   
\end{remark}

The following are classical results about almost everywhere convergence of  C and RL derivatives  to the classical differential operator $\frac{\dd}{\dd t}$, when the order of convergence tends to 1.

\begin{prop}\label{conv-RL1} The following limits hold.
\begin{enumerate}[a)]
\item If $f\in W^{1,1}(a,b)$, then $$\lim\limits_{\al\nearrow 1}\,^{R}{^L_a} D^{\alpha}f(t)= \frac{\dd}{\dd t}f(t), \quad a.e. \, \text{ in } \, (a,b).$$
\item If $f\in C^1[a,b]$, then
$$\displaystyle\lim_{\alpha\nearrow 1} \,^{R}{^L_a}D^{\alpha} f(t)= \frac{\dd}{\dd t} f(t) \qquad  \text{ for every } \, t \in  (a,b].$$
\end{enumerate}
\end{prop}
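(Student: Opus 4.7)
The plan is to reduce everything to a statement about the Riemann--Liouville integral of $f'$. By Proposition~\ref{relacion RL-C},
\[
{}^{RL}_{\,a}D^\alpha f(t) \;=\; \frac{f(a)}{\Gamma(1-\alpha)}(t-a)^{-\alpha} \;+\; {}^C_{\,a}D^\alpha f(t).
\]
Since $1/\Gamma(1-\alpha)=(1-\alpha)/\Gamma(2-\alpha)\to 0$ as $\alpha\nearrow 1$ while $(t-a)^{-\alpha}$ stays finite for each fixed $t>a$, the boundary term tends to $0$ pointwise on $(a,b]$. Hence both (a) and (b) reduce to proving ${}^C_{\,a}D^\alpha f(t)\to f'(t)$ in the respective sense.

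Setting $\beta:=1-\alpha\searrow 0^+$, we have ${}^C_{\,a}D^\alpha f(t)={}_aI^\beta f'(t)$, so I would write
\[
{}_aI^\beta f'(t)-f'(t)
=\frac{1}{\Gamma(\beta)}\int_a^t\!\bigl[f'(\tau)-f'(t)\bigr](t-\tau)^{\beta-1}d\tau
+ f'(t)\!\left(\frac{(t-a)^\beta}{\Gamma(\beta+1)}-1\right).
\]
The second summand vanishes in the limit because $(t-a)^\beta\to 1$ and $\Gamma(\beta+1)\to 1$. So the whole problem is to control the first summand, which I denote $J(\beta,t)$, by splitting the integration domain at $\tau=t-\delta$ for a small auxiliary $\delta>0$.

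On the ``far'' piece $(a,t-\delta)$ we have $(t-\tau)^{\beta-1}\le \delta^{\beta-1}$, so
\[
\left|\frac{1}{\Gamma(\beta)}\int_a^{t-\delta}[f'(\tau)-f'(t)](t-\tau)^{\beta-1}d\tau\right|
\le \frac{\delta^{\beta-1}}{\Gamma(\beta)}\bigl(\|f'\|_{L^1(a,b)}+(b-a)|f'(t)|\bigr),
\]
and the prefactor equals $\beta\,\delta^{\beta-1}/\Gamma(\beta+1)\to 0$ as $\beta\to 0^+$ for any fixed $\delta$. The ``near'' piece $(t-\delta,t)$ is where the two cases diverge. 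In case (b), uniform continuity of $f'$ on $[a,b]$ lets me pick $\delta$ so that $|f'(\tau)-f'(t)|<\varepsilon$ throughout; then the near piece is bounded by $\varepsilon(t-a)^\beta/\Gamma(\beta+1)\to\varepsilon$, so $\limsup_{\beta\to 0^+}|J(\beta,t)|\le \varepsilon$ and (b) follows on every $t\in(a,b]$.

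The genuine obstacle is (a), which asks for the pointwise limit a.e. rather than for continuous $f'$. My plan is to invoke the Lebesgue differentiation theorem: at any Lebesgue point $t$ of $f'$ (which is a.e.\ $t\in(a,b)$) and any $\varepsilon>0$, there is $\delta_0>0$ with $\Phi(u):=\int_0^u|f'(t-v)-f'(t)|\,dv\le \varepsilon u$ for $0<u\le \delta_0$. Applying this with the change of variables $u=t-\tau$ and an integration by parts, the near piece becomes
\[
\frac{1}{\Gamma(\beta)}\int_0^{\delta_0}\!\Phi'(u)\,u^{\beta-1}du
=\frac{\Phi(\delta_0)\delta_0^{\beta-1}}{\Gamma(\beta)}
+\frac{1-\beta}{\Gamma(\beta)}\int_0^{\delta_0}\!\Phi(u)\,u^{\beta-2}du
\le\frac{\Phi(\delta_0)\delta_0^{\beta-1}}{\Gamma(\beta)}+\varepsilon(1-\beta)\frac{\delta_0^\beta}{\Gamma(\beta+1)},
\]
using $\Phi(u)\le \varepsilon u$ in the last inequality. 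As $\beta\to 0^+$ the first summand vanishes and the second tends to $\varepsilon$, so again $\limsup_{\beta\to 0^+}|J(\beta,t)|\le \varepsilon$ and, $\varepsilon$ being arbitrary, ${}_aI^\beta f'(t)\to f'(t)$ at every Lebesgue point. Combined with the vanishing of the boundary contribution from Proposition~\ref{relacion RL-C}, both (a) and (b) follow.
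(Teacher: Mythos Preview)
Your argument is correct. The structural reduction is exactly the one the paper uses: invoke Proposition~\ref{relacion RL-C} so that the boundary term $\dfrac{f(a)}{\Gamma(1-\alpha)}(t-a)^{-\alpha}$ dies pointwise on $(a,b]$, and then the problem becomes ${}_aI^{\beta}f'(t)\to f'(t)$ as $\beta\searrow 0$. The difference is only in how that last step is justified. The paper does not argue it out; it simply appeals to \cite[Theorem~2.6]{Samko} for part~(a) and to \cite[Theorem~2.20]{Diethelm} for part~(b). You instead give a self-contained proof: a near/far splitting of the convolution, with uniform continuity of $f'$ handling the near part in case~(b), and a Lebesgue-point/integration-by-parts estimate (using $\Phi(u)=\int_0^u|f'(t-v)-f'(t)|\,dv\le\varepsilon u$) handling it in case~(a). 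This is precisely the classical mechanism behind the cited Samko result, so your route is not genuinely different---just more explicit. Two small cosmetic points: the near-piece bound in~(b) is actually $\varepsilon\,\delta^{\beta}/\Gamma(\beta+1)$ rather than $\varepsilon\,(t-a)^{\beta}/\Gamma(\beta+1)$, though your expression is a valid upper bound since $\delta<t-a$; and in~(a) it is worth remarking that the one-sided averages $\Phi(u)/u\to 0$ hold a.e.\ because one-sided intervals shrink nicely to~$t$ in the sense required by the Lebesgue differentiation theorem.
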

The proof of $a)$ follows from \cite[Theorem 2.6]{Samko} and Proposition \ref{relacion RL-C}. For $b)$ see \cite[Theorem 2.20]{Diethelm}. As a consequence of Propositions \ref{relacion RL-C} and \ref{conv-RL1} the next result holds.

\begin{prop}\label{conv-C1} The following limits hold. 
\begin{enumerate}[a)]
\item If $f\in W^{1,1}(a,b)$, then $$\lim\limits_{\al\nearrow 1}\,^C_{a} D^{\alpha}f(t)= \frac{\dd}{\dd t}f(t), \quad a.e. \, \text{ in } \, (a,b).$$
\item If $f\in C^1[a,b]$, then
$$\displaystyle\lim_{\alpha\nearrow 1}{^C_a}D^{\alpha} f(t)= \frac{\dd}{\dd t} f(t) \qquad  \text{ for every } \, t \in  (a,b].$$
\end{enumerate}
\end{prop}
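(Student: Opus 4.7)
The strategy is exactly the one indicated in the text: use Proposition \ref{relacion RL-C} to express the Caputo derivative in terms of the Riemann--Liouville derivative plus a correction term, then apply Proposition \ref{conv-RL1} to the RL part and show that the correction vanishes in the limit $\alpha \nearrow 1$.

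First, I rewrite Proposition \ref{relacion RL-C} in the form
\[
{}^C_a D^\alpha f(t) \;=\; {}^{RL}_a D^\alpha f(t) \;-\; \frac{f(a)}{\Gamma(1-\alpha)}(t-a)^{-\alpha},
\]
valid for every $f \in W^{1,1}(a,b)$ and every $\alpha \in (0,1)$. The plan then reduces to analyzing the two terms on the right-hand side separately as $\alpha \nearrow 1$.

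For the correction term, I would fix any $t \in (a,b]$ and observe that $(t-a)^{-\alpha} \to (t-a)^{-1}$ as $\alpha \nearrow 1$ (in particular it remains bounded on compact subsets of $(a,b]$), while $\Gamma(1-\alpha) \to +\infty$ since $\Gamma$ has a simple pole at $0$. Hence
\[
\lim_{\alpha \nearrow 1} \frac{f(a)}{\Gamma(1-\alpha)}(t-a)^{-\alpha} \;=\; 0 \qquad \text{for every } t \in (a,b].
\]
This is the only genuinely ``analytic'' step and it is routine; there is no real obstacle here since the set $\{t=a\}$ is a single point, hence negligible for the a.e.\ statement.

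Finally, I combine this with Proposition \ref{conv-RL1}: in part (a), ${}^{RL}_a D^\alpha f(t) \to f'(t)$ a.e.\ in $(a,b)$, and adding the above pointwise (hence a.e.) null limit yields the desired a.e.\ convergence of ${}^C_a D^\alpha f(t)$ to $f'(t)$; in part (b), ${}^{RL}_a D^\alpha f(t) \to f'(t)$ for every $t \in (a,b]$ under the stronger hypothesis $f \in C^1[a,b]$, and again the correction vanishes pointwise on $(a,b]$, giving the claim. No step looks delicate: the whole proof is an algebraic identity together with the pole behavior of $\Gamma$ at the origin.
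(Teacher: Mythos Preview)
Your proof is correct and follows exactly the route the paper indicates: it derives Proposition~\ref{conv-C1} as an immediate consequence of Propositions~\ref{relacion RL-C} and~\ref{conv-RL1}, the only additional ingredient being the (routine) observation that the correction term $\frac{f(a)}{\Gamma(1-\alpha)}(t-a)^{-\alpha}$ vanishes pointwise on $(a,b]$ as $\alpha\nearrow 1$ because $\Gamma(1-\alpha)\to+\infty$. The paper gives no further details beyond citing those two propositions, and your write-up fills them in accurately.
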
 

For the CF derivative we present the next result, which is a generalization of the one obtained in \cite{RoTaVe:2019}. 
\begin{prop}The following limits hold. 

\begin{enumerate}[a)]
\item If $f\in W^{1,1}(a,b)$, then$$\displaystyle\lim_{\alpha\nearrow 1}{^C}{^F_a}D^{\alpha} f(t)= \frac{\dd}{\dd t} f(t) \qquad  a.e. \text{ in } (a,b).$$
\item If $f\in C^2[a,b]$, then
$$\displaystyle\lim_{\alpha\nearrow 1}{^C}{^F_a}D^{\alpha} f(t)= \frac{\dd}{\dd t} f(t) \qquad  \text{ for every } \, t \in  (a,b].$$
\end{enumerate}
\end{prop}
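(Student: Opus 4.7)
The plan is to view the CF derivative as a one-sided convolution and exploit an approximate-identity argument. Setting $\e := 1-\al$ and extending $f'$ by zero outside $(a,b)$, one can write
\[
{^C}{^F_a}D^\alpha f(t)=(f'*k_\e)(t),\qquad k_\e(s):=\tfrac{1}{\e}e^{-\frac{1-\e}{\e}s}\chi_{(0,\infty)}(s).
\]
A direct computation shows $k_\e\ge 0$, $\int_0^{\infty}k_\e\,\dd s=\frac{1}{1-\e}\to 1$, and for every $\delta>0$, $\int_\delta^{\infty}k_\e\,\dd s=\frac{1}{1-\e}e^{-\frac{1-\e}{\e}\delta}\to 0$ as $\e\searrow 0$. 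Equivalently, the normalised kernels $\tilde k_\e:=(1-\e)k_\e$ form a classical approximate identity supported on $\bbR^+$.

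For part (a), I would apply the standard a.e.-convergence theorem for convolution against approximate identities: $(f'*\tilde k_\e)(t)\to f'(t)$ at every Lebesgue point of the $L^1$ function $f'$, hence almost everywhere in $(a,b)$. Since ${^C}{^F_a}D^\alpha f=(1-\e)^{-1}(f'*\tilde k_\e)$ and $(1-\e)^{-1}\to 1$, the desired convergence in (a) follows. A small caveat is that the approximation is one-sided, only using values of $f'$ to the left of $t$; but almost every point is a two-sided Lebesgue point of an $L^1$ function, so this causes no trouble.

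For part (b), the $C^2$ hypothesis permits a cleaner integration by parts in the defining integral, with $u=f'(\tau)$ and $dv=e^{-\frac{\al}{1-\al}(t-\tau)}\,\dd\tau$, yielding
\[
{^C}{^F_a}D^\alpha f(t)=\frac{1}{\al}\Bigl[f'(t)-f'(a)e^{-\frac{\al}{1-\al}(t-a)}\Bigr]-\frac{1}{\al}\int_a^t f''(\tau)e^{-\frac{\al}{1-\al}(t-\tau)}\,\dd\tau.
\]
Fixing $t\in(a,b]$ and letting $\al\nearrow 1$, $\frac{1}{\al}\to 1$, the exponential in the boundary term tends to $0$ (the exponent goes to $-\infty$ because $t>a$), and the last integral is bounded in absolute value by $\|f''\|_\infty\,\frac{1-\al}{\al}\bigl(1-e^{-\frac{\al}{1-\al}(t-a)}\bigr)\to 0$. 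Combining the three estimates gives ${^C}{^F_a}D^\alpha f(t)\to f'(t)$ for every $t\in(a,b]$. The main obstacle is the a.e.\ convergence in (a): the kernel $k_\e$ is only exponentially (not compactly) concentrated and $f'$ is merely $L^1$, so one cannot rely on a crude continuity argument and must really invoke Lebesgue differentiation theory; the integration-by-parts route in (b) is much more transparent but is only available under the stronger regularity.
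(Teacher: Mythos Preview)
Your argument is correct and takes a genuinely different route from the paper.

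For part (a), the paper proceeds by a density argument: it approximates $f'$ in $L^1(a,b)$ by a step function $g'_\e$, computes ${^C}{^F_a}D^{\alpha} g_\e$ explicitly on each subinterval, applies dominated convergence to obtain $\|{^C}{^F_a}D^{\alpha} g_\e - g'_\e\|_{L^1}\to 0$, and combines this with a Young-type bound on $\|{^C}{^F_a}D^{\alpha}(f-g_\e)\|_{L^1}$ to conclude $\|{^C}{^F_a}D^{\alpha} f - f'\|_{L^1}\to 0$. So the paper actually establishes $L^1$-convergence and declares (a) proved from there. Your approach instead recognises the exponential kernel $\tilde k_\e(s)=\mu^{-1}e^{-s/\mu}\chi_{(0,\infty)}(s)$ with $\mu=\e/(1-\e)$ as a bona fide (one-sided, radially decreasing) approximate identity and invokes Lebesgue differentiation to get pointwise convergence at every left Lebesgue point of $f'$. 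This is shorter, avoids the step-function bookkeeping, and delivers the a.e.\ statement directly rather than through $L^1$-convergence; the paper's route, on the other hand, is more self-contained (no appeal to maximal-function or Lebesgue-point machinery) and yields norm convergence as a by-product, which they later exploit in the order-of-convergence analysis.

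For part (b) the paper simply cites an external reference, so your integration-by-parts computation (which is the natural one, and checks out line by line) in fact supplies a proof where the paper gives none.
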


\begin{proof} a)
Let $f \in W^{1,1}(a,b)$ be. Then $f' \in L^1(a,b)$ and by classical density results in $L^1(a,b)$ (see e.g. \cite{Bartle})   
there exists a simple function, which will be called $g_\e'$ by an abuse of language, such that $g_\varepsilon' (t)=\sum\limits_{i=1}^n q_i \chi_{[a_i,b_i]}(t)$,   where $b_i\leq a_{i+1}$ for every $i=1,...,n-1$ and
\begin{equation}\label{eq0}
||f'-g'_{\varepsilon}||_{L^1 (a,b)}<\frac{\varepsilon}{2}.
\end{equation}
Now for every $t\in[a,b]$, let  $g_\e(t)=\int_a^t g_\e' $ be.
Then if  $t\in[a_k,b_k]$, for any $k$ given
it follows that 
\begin{equation}\label{eq1}
 {^C}{^F_a}D^{\alpha} g_\e(t)=\sum\limits_{i=1}^{k-1}q_i \frac{1}{\alpha} \left(e^{-\frac{\alpha}{1-\alpha}(t-b_i)}-e^{-\frac{\alpha}{1-\alpha}(t-a_i)}\right)+q_k \frac{1}{\alpha} \left(1-e^{-\frac{\alpha}{1-\alpha}(t-a_k)}\right).
\end{equation}
Taking the limit when $\al \nearrow 1$ in  \eqref{eq1}, we have that 
\begin{equation*}
\lim\limits_{\alpha\nearrow 1} {^C}{^F_a}D^{\alpha} g_\e(t)=q_k=g_\varepsilon' (t).
\end{equation*}
For every $t\in (a,b)$ and $\alpha\geq \frac{1}{2}$ the following estimations hold
$$|g_\e'(t)|\leq \sum\limits_{i=i}^n q_i \quad \text{ and } \quad \left| {^C}{^F_a}D^{\alpha} g_\e(t)\right|\leq \sum\limits_{i=i}^k 2q_i,$$
thus the Lebesgue Convergence Theorem can be applied to compute the next limit 
\begin{equation}\label{eq2}
\lim\limits_{\alpha\nearrow 1} \left|\left|{^C}{^F_a}D^{\alpha} g_\e-g_\varepsilon'\right|\right|_{L^1 (a,b)}=\lim\limits_{\alpha\nearrow 1} \int_a^b \left|{^C}{^F_a}D^{\alpha} g_\e(t)-g_\varepsilon'(t)\right|\dd t =0.
\end{equation}
By the other side, 
\begin{equation}\label{eq00.4}
\left|\left|{^C}{^F_a}D^{\alpha} f- {^C}{^F_a}D^{\alpha} g_\e\right|\right|_{L^1 (a,b)}\leq \int_a^b \left|f'(\tau) -g_\varepsilon' (\tau) \right| \frac{1-e^{-\frac{\alpha}{1-\alpha}(b-\tau)}}{\alpha} \dd \tau\leq \frac{1}{\alpha}||f'- g_\varepsilon'||_{L^1 (a,b)}.
\end{equation}
where  Fubini's Theorem has been applied due to the fact that $f'(\tau) -g_{\varepsilon}' (\tau) \in L^1(a,b)$ and the boundedness of the Caputo-Fabrizio kernel. \\
From  \eqref{eq0} and \eqref{eq00.4} we have that
\begin{equation}\label{eq5}
\left|\left|{^C}{^F_a}D^{\alpha} f- {^C}{^F_a}D^{\alpha} g_\e\right|\right|_{L^1 (a,b)}< \frac{\varepsilon}{2\alpha}.
\end{equation}
Finally, from \eqref{eq0} and \eqref{eq5} it holds that
\begin{equation}\label{eq00.6}
\left|\left|{^C}{^F_a}D^{\alpha} f- f'\right|\right|_{L^1 (a,b)}\leq \frac{\varepsilon}{2\al}+\left|\left|{^C}{^F_a}D^{\alpha} g_\e - g_\varepsilon'\right|\right|_{L^1 (a,b)}+\frac{\varepsilon}{2}.
\end{equation}
Taking the limit when $\al\nearrow 1$, in  \eqref{eq00.6} we conclude that 
\begin{equation*}
\begin{split}
\lim\limits_{\alpha\nearrow 1}\left|\left|{^C}{^F_a}D^{\alpha} f- f'\right|\right|_{L^1 (a,b)}\leq \varepsilon, \qquad \text{for every } \varepsilon>0,
\end{split}
\end{equation*}
and in consequence item $a)$ is proved. The proof of $b)$ is given in \cite{RoTaVe:2019}.\end{proof}
As we said before, the goal of this paper is to analyze the convergence and the ``speed'' of convergence of the mentioned fractional derivatives in different norms. Are they strongly different taking into account that C and RL derivatives  are defined in terms of singular kernels while CF derivative is defined for a no singular kernel?  We will see in the next section that the answer is no. 

In Figures 1 and 2, functions $f(t)=t+1$ and $g(t)=\cos t$ are compared, and it can be seen how the fractional RL, CF  and C derivatives converge pointwise in $(0,1)$ to $f'$ and $g'$ respectively.

\begin{figure}[h!]
\begin{minipage}[b]{0.5\linewidth}
\centering
\includegraphics[width=8cm]{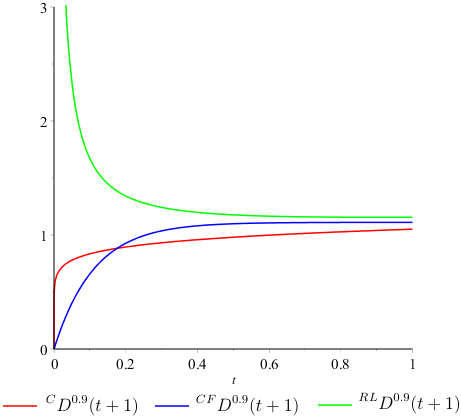}
\end{minipage}
\hspace{0.5cm}
\begin{minipage}[b]{0.5\linewidth}
\centering
\includegraphics[width=8cm]{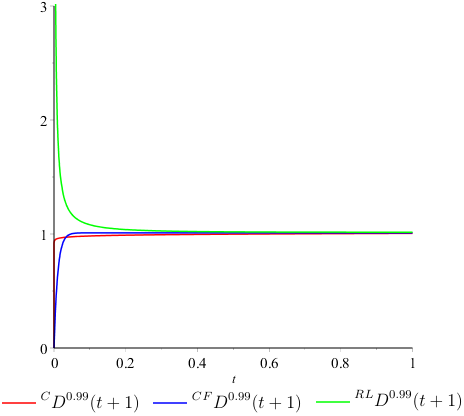}
\end{minipage}
\caption{Some fractional derivatives of $f(t)=t+1$}
\label{figura001}
\end{figure}
\begin{figure}[h!]
\begin{minipage}[b]{0.5\linewidth}
\centering
\includegraphics[width=8cm]{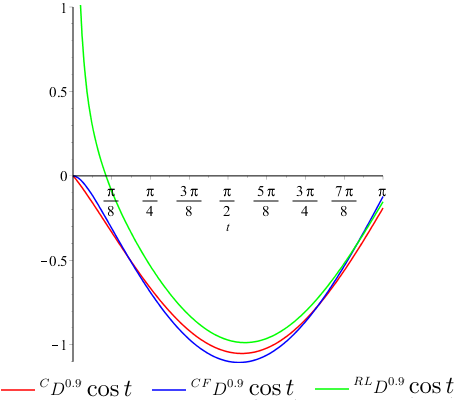}
\end{minipage}
\hspace{0.5cm}
\begin{minipage}[b]{0.5\linewidth}
\centering
\includegraphics[width=8cm]{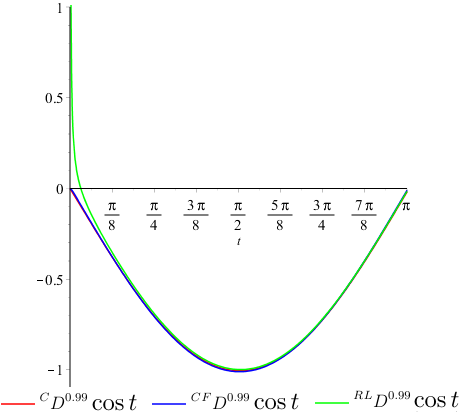}
\end{minipage}
\caption{Some fractional derivatives of $f(t)=\cos(t)$}
\label{figura002}
\end{figure}

%-------------------------------------------------------------------------------------------------------------------------------------------------------------------

\section{Caputo and Caputo-Fabrizio Convergence}
\subsection{Order of convergence}

In  this section we study the order of convergence respect on the parameter related to the order of differentiation,  for different $L^p$ norms. In this direction we define the following general error expression associated to a general fractional operator.

\begin{defi}
Let $(a,b)\subset \mathbb{R}$, and  $f\in W^{1,1}(a,b)$,  $1\leq p\leq \infty$. We define the error estimate in the $L^p$ norm associated to the fractional derivative as
  \begin{equation}\label{Error-Lp}
	\begin{array}{rl}
	E_{f,p}\, :\, (0,1) & \rightarrow \mathbb{R}_0^+\\
\beta & \rightarrow 	E_{f,p}(\beta)=||{_a}D^{1-\beta}f -f'||_{L^p(a,b)}.
\end{array}
\end{equation}
\end{defi}

From now on we will denote by ${_a}D^{1-\beta}$ to refer to the  fractional derivative of RL, C or CF type (even to the general operator defined in (\ref{Gen-DF}), where the superscript may be omitted if it does not leads to doubts).   

\begin{note}
By observing the graphics presented in Figures \ref{figura001} and \ref{figura002}, we can suppose that the error estimate for the RL operator will not be, in general, an estimate that converges to zero.   In fact, if we consider  $f(t)\equiv 1$ and $a=0$, Propositions \ref{relacion RL-C} and \ref{prop-deriv-pot} yields that
\begin{equation}\label{DRL-cte}
{^R}{^L}D^{1-\beta} f(t)=\frac{t^{\beta-1}}{\Gamma(\beta)}.
\end{equation}
By replacing $(\ref{DRL-cte})$ in $(\ref{Error-Lp})$ we have
$$E_{f,1}(\beta)=\left|\left|{^R}{^L}D^{1-\beta} f- f'\right|\right|_{L^1(0,b)}=\left|\left|{^R}{^L}D^{1-\beta} f\right|\right|_{L^1(0,b)}=\frac{b^\beta}{\Gamma(\beta+1)}.$$
Then 
 $$\lim\limits_{\beta \rightarrow 0^+}\frac{E_{f,1}(\beta)}{\beta^r}=+\infty.$$
Analogously,  
$$\lim\limits_{\beta \rightarrow 0^+}\frac{E_{f,\infty}(\beta)}{\beta^r}=+\infty.$$
We conclude that the RL errors $E_{f,1}(\beta)$ and $E_{f,\infty}(\beta)$ are not, in general,  $o(\beta^r)$ or  $O(\beta^r)$ for every $r>0$.
\end{note}

The next Lemma states that, if we can estimate the rates $\frac{E_{g,1}(\beta)}{\beta^r}$ for every  $g$ such that $g' \in D$ (where $D$ is a dense set contained in $L^1(a,b)$), then we can estimate the rate  $\frac{E_{f,1}(\beta)}{\beta^r}$ for any function $f\in W^{1,1}(a,b)$.

\begin{lemma}\label{acotgeneral}
Let $(a,b)\subset \bbR$ and ${_a}D^{1-\beta}$ a fractional operator defined in $(\ref{Gen-DF})$. Suppose that there exist a dense subset $D\subset L^1(a,b)$ and a fix number $r>0$  such that 
\begin{equation}\label{hipot1}
E_{g,1}(\beta)=O(\beta^r)\quad  (\text{resp.} \,E_{g,1}(\beta)=o(\beta^r)) , \quad  \, \beta\rightarrow 0^+ \quad \forall g\in W^{1,1}(a,b), g' \in D.
\end{equation}
 Then, 
\begin{equation}\label{conclu1}
E_{f,1}(\beta)=O(\beta^r)\quad  (\text{resp.} \, E_{f,1}(\beta)=o(\beta^r)), \quad \beta\rightarrow 0^+ \quad  \forall f\in W^{1,1}(a,b).
\end{equation}
\end{lemma}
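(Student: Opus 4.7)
The plan is a density argument exploiting the convolution structure of the operator. Fix $f\in W^{1,1}(a,b)$. By density of $D$ in $L^1(a,b)$, for any $\eta>0$ I can pick $g\in W^{1,1}(a,b)$ with $g'\in D$ and $\|f'-g'\|_{L^1(a,b)}<\eta$; it suffices to set $g(t)=f(a)+\int_a^t g'(s)\,\dd s$. The triangle inequality then splits
\begin{equation*}
E_{f,1}(\beta) \;\leq\; \|{_a}D^{1-\beta}f - {_a}D^{1-\beta}g\|_{L^1(a,b)} \;+\; E_{g,1}(\beta) \;+\; \|f'-g'\|_{L^1(a,b)}.
\end{equation*}

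The key step is to bound the operator-difference term independently of $\beta$. By linearity of the convolution in Definition \ref{Def-Gen-DF}, ${_a}D^{1-\beta}(f-g) = (f'-g')*h(\cdot,\beta)$, so Fubini's theorem yields the Young-type estimate
\begin{equation*}
\|{_a}D^{1-\beta}(f-g)\|_{L^1(a,b)} \;\leq\; \|f'-g'\|_{L^1(a,b)}\,\|h(\cdot,\beta)\|_{L^1(0,b-a)}.
\end{equation*}
The uniform integrability of $h$ in Definition \ref{Def-Gen-DF} provides a finite $M := \sup_{\beta\in(0,\beta_0]}\|h(\cdot,\beta)\|_{L^1(0,b-a)}$. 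Together with the hypothesis $E_{g,1}(\beta)\leq C_g\,\beta^r$ for $\beta$ small, this gives
\begin{equation*}
E_{f,1}(\beta)\;\leq\;(M+1)\eta\;+\;C_g\,\beta^r .
\end{equation*}

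The main obstacle is balancing $\eta$ against $\beta^r$, since the hypothesis leaves $C_g$ dependent on $g$. I would close the argument by letting $\eta$ shrink with $\beta$: choose an approximant $g_\beta$ with $\|f'-g_\beta'\|_{L^1}\leq \beta^r$ for the $O$-case (respectively $\|f'-g_\beta'\|_{L^1}=o(\beta^r)$ for the $o$-case), which density allows. This makes the approximation contribution $O(\beta^r)$ (resp.\ $o(\beta^r)$); the delicate point is then to argue that the constants $C_{g_\beta}$ remain bounded along the approximating family. This control is typically obtained by instantiating the lemma with $D$ chosen concretely (for instance, step or simple functions, as in the proof of the CF convergence given earlier in the text), so that the bound $E_{g,1}(\beta)\leq C_g\beta^r$ can be made explicit in a norm of $g'$ that stays uniformly controlled along $g_\beta$. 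Alternatively, a diagonal extraction along a sequence $\beta_n\to 0^+$ recovers the $O$ or $o$ conclusion. That final control on $C_{g_\beta}$ is the only non-routine ingredient in the argument.
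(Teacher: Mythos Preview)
Your decomposition---triangle inequality, Young's inequality on the convolution term, and the uniform $L^1$ bound on $h(\cdot,\beta)$---is exactly the paper's argument. The paper arrives at the same inequality
\[
E_{f,1}(\beta)\leq (K+1)\varepsilon + C\beta^r
\]
and then simply concludes with ``from the discretionary choice of $\varepsilon$, the thesis holds,'' without addressing the dependence $C=C_{g_\varepsilon}$ that you rightly flag. In other words, the paper's proof of the lemma \emph{as stated} stops precisely at the point you identify as the non-routine step; it does not supply the uniform control on $C_g$, nor the diagonal argument you sketch.

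Your worry is therefore legitimate, and the lemma in full generality is not actually proved in the paper either. What rescues the paper's applications (Theorems \ref{OrderCapFab} and \ref{OrderCap}) is that in each concrete case $D$ is the set of step functions and the constant $C_{g_\varepsilon}$ is exhibited explicitly as a multiple of $\|g'_\varepsilon\|_{L^1(a,b)}$, which is bounded by $1+\|f'\|_{L^1(a,b)}$ for $\varepsilon<1$ (see the estimate leading to \eqref{betaB}). So the missing uniformity is supplied downstream, case by case, rather than in the lemma itself---which is essentially the first closure strategy you propose.
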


\begin{proof}
Let $f\in W^{1,1}(a,b)$ and $\varepsilon>0$ be. Then $f' \in L^1(a,b)$  and there exist a function  $g'_\e$ in D, (abuse of language again), such that
\begin{equation}\label{densidad}
\left|\left|g'_\e-f'\right|\right|_{L^1(a,b)}<\varepsilon.
\end{equation}
Thus, if we set $g_\e(t) =\int_a^t g'_\e$  for every $t\in (a,b)$ it holds that  
\begin{equation}\label{desnorma}
E_{f,1}(\beta)\leq \left|\left|{_a}D_t^{1-\beta} (f- g_\varepsilon)\right|\right|_{L^1(a,b)}+E_{g_\varepsilon,1}(\beta)+\left|\left|g'_\varepsilon-f'\right|\right|_{L^1(a,b)}.
\end{equation}
By applying definition (\ref{Gen-DF}) and Young's inequality   we get
\begin{equation}\label{desigyoung}
\begin{split}
\left|\left|{_a}D_t^{1-\beta} (f- g_\varepsilon)\right|\right|_{L^1(a,b)}&=\left|\left|(f- g_\varepsilon)'*h(\cdot, \beta)\right|\right|_{L^1(a,b)}\leq  \left|\left|f'- g'_\varepsilon \right|\right|_{L^1(a,b)}\left|\left|h(\cdot,\beta)\right|\right|_{L^1(a,b)}.
\end{split}
\end{equation}
And using the uniformly boundedness of $h$ and (\ref{densidad}) gives that 
\begin{equation}\label{des00}
\left|\left|{_a}D_t^{1-\beta} (f- g_\varepsilon)\right|\right|_{L^1(a,b)}\leq  K\e.
\end{equation}
By applying inequalities  \eqref{hipot1} \eqref{densidad}  and   \eqref{des00}, to \eqref{desnorma}, it  yields that 
\begin{equation}\label{desnorma2}
\begin{split}
E_{f,1}(\beta)&\leq (K+1)\varepsilon +C\beta^r  \qquad \be\rightarrow 0.
\end{split}
\end{equation}
From the  discretionary choice of $\e$, the thesis holds.
\end{proof}

\begin{remark}\label{coroconvgeneral}
An analogous result to the given in  Lemma \ref{acotgeneral} is obtained by replacing the  $||\cdot||_{L^1(a,b)}$ norm by the  $||\cdot||_p$ norm, for $1\leq p\leq \infty$, holds. The proof is identical due to the validity of Young's inequality.
\end{remark}
Let us compute the order of convergence for the CF derivative.

\begin{theo}\label{OrderCapFab}
Let  $f\in W^{1,1}(a,b)$ and  ${_a}D_t^{1-\beta}={^C}{^F_a}D^{1-\beta}$. Then,
$$E_{f,1}(\beta)=o(\beta^r),\quad \beta\rightarrow 0^+ ,\,\, \forall r\in (0,1),$$
and 
$$E_{f,1}(\beta)=O(\beta),\quad \beta\rightarrow 0^+.$$
\end{theo}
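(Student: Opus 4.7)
The plan is to invoke Lemma \ref{acotgeneral} with the dense subset $D\subset L^1(a,b)$ of step functions $\var=\sum_{i=1}^{n} q_i\chi_{[a_i,b_i]}$ with pairwise disjoint $[a_i,b_i]\subset[a,b]$. Because $\be^{1-r}\to 0^+$ for every $r\in(0,1)$, a single bound $E_{g,1}(\be)=O(\be)$ on $D$ automatically delivers $E_{g,1}(\be)=o(\be^r)$ on the same set, so by the lemma both conclusions of the theorem will follow simultaneously once I prove the $O(\be)$ estimate on $D$. Using linearity of ${^C}{^F_a}D^{1-\be}$ and the triangle inequality in $L^1(a,b)$, I reduce further to the single building block $h\in W^{1,1}(a,b)$ defined by $h(a)=0$ and $h'=\chi_{[c,d]}$ for an arbitrary $[c,d]\subset(a,b)$.

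For this $h$ I would evaluate the convolution in Definition \ref{def-DerCF} directly, splitting by the range of $t$: the result is ${^C}{^F_a}D^{1-\be} h\equiv 0$ on $(a,c)$, equal to $\frac{1}{1-\be}\bigl(1-e^{-\frac{1-\be}{\be}(t-c)}\bigr)$ on $[c,d]$, and to $\frac{1}{1-\be}\bigl(e^{-\frac{1-\be}{\be}(t-d)}-e^{-\frac{1-\be}{\be}(t-c)}\bigr)$ on $[d,b]$. Writing $\tfrac{1}{1-\be}=1+\tfrac{\be}{1-\be}$ and subtracting $\chi_{[c,d]}$, the pointwise error is bounded by $\tfrac{\be}{1-\be}+\tfrac{1}{1-\be}e^{-\frac{1-\be}{\be}(t-c)}$ on $[c,d]$ and by $\tfrac{1}{1-\be}e^{-\frac{1-\be}{\be}(t-d)}$ on $[d,b]$. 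Integration produces one summand $\tfrac{\be(d-c)}{1-\be}$ together with two summands of the form $\tfrac{\be}{(1-\be)^2}\bigl(1-e^{-\frac{1-\be}{\be}\ell}\bigr)$, $\ell\le b-a$, each manifestly $O(\be)$ with a constant depending only on $b-a$. Summing over the $n$ building blocks and invoking linearity then yields $E_{g,1}(\be)=O(\be)$ on $D$, and Lemma \ref{acotgeneral} (with $r=1$) extends the estimate to every $f\in W^{1,1}(a,b)$.

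The main obstacle I anticipate is the ``boundary layer'' of $e^{-\frac{1-\be}{\be}(t-c)}$ near $t=c$, where the exponential is order one rather than exponentially small. Fortunately, its width in $t$ is of order $\be$ (because the decay constant $\tfrac{1-\be}{\be}$ blows up), so its $L^1$ contribution is again $O(\be)$ and does not spoil the bound. I also have to check that the $O$-constants are uniform in $\be$ on some fixed $(0,\be_0]$ with $\be_0<1$, which is immediate from the boundedness of $\tfrac{1}{1-\be}$ there; beyond that, the remaining computations are routine exponential integrals.
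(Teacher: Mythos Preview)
Your proposal is correct and follows essentially the same approach as the paper: approximate $f'$ by step functions, compute the Caputo--Fabrizio derivative of the antiderivative explicitly (yielding exactly the exponential expressions the paper obtains in \eqref{desigorden}), bound the resulting $L^1$ error, and invoke Lemma~\ref{acotgeneral}. The only streamlining is your observation that an $O(\beta)$ bound on the dense set automatically yields $o(\beta^r)$ for every $r\in(0,1)$, which lets you skip the separate $o(\beta^r)$ computation the paper performs; your decomposition into single building blocks $h'=\chi_{[c,d]}$ handled on all of $(a,b)$ (including the tail on $[d,b]$) is otherwise the same calculation, just organized more cleanly than the paper's interval-by-interval sum.
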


\begin{proof}

Let $h:\bbR^+\times(0,1)\rightarrow \mathbb{R}$ be defined by $h(t,\beta)=\frac{e^{-\frac{1-\beta}{\beta}t}}{\beta}$. Note that  ${_a^h}D_t^{1-\beta}={^C}{^F_a}D^{1-\beta}$ because $h$ is an admissible kernel in Definition \ref{Def-Gen-DF}.

Now, let $\varepsilon>0$ and  $f\in W^{1,1}(a,b)$ be. Let $D$ be the  subset of simple functions  dense in  $L^1(a,b)$ described in Proposition 4 and, by following the previous notation, let the function
 $g'_\varepsilon(t)=\sum\limits_{i=1}^n q_i \chi_{[a_i,b_i]}(t) \in D$ be such that 
\begin{equation}
||f'-g'_\varepsilon||_{L^1(a,b)}<\varepsilon.
\end{equation}
Note that
\begin{equation}\label{acotpoli}
||g_\varepsilon'||_{L^1(a,b)}\leq ||g_\varepsilon'-f'||_{L^1(a,b)}+||f'||_{L^1(a,b)}<\varepsilon+||f'||_{L^1(a,b)}.
\end{equation}
Then if we set $g_\varepsilon(t) =\int_a^t g_\varepsilon'$ and  integrate by parts, it holds that the error estimate in the interval $(a_k,b_k)$ verifies that 
\begin{equation}\label{desigorden}
\begin{split}
E_{g_\varepsilon\chi_{[a_k,b_k]},1}(\beta)&=\int_{a_k}^{b_k}\left|\sum\limits_{i=1}^{k-1} \frac{q_i}{1-\beta} \left(e^{-\frac{1-\beta}{\beta}(t-b_i)}-e^{-\frac{1-\beta}{\beta}(t-a_i)}\right)+ \frac{q_k}{1-\beta} \left(1-e^{-\frac{1-\beta}{\beta}(t-a_k)}\right)-q_k\right|\dd t\\
&\leq \beta^r\left[\sum\limits_{i=1}^{k-1}\frac{\left|q_i\right| \beta^{1-r}}{(1-\beta)^2} \left(e^{-\frac{1-\beta}{\beta}(a_k-b_i)}-e^{-\frac{1-\beta}{\beta}(b_k-b_i)}+ e^{-\frac{1-\beta}{\beta}(a_k-a_i)}-e^{-\frac{1-\beta}{\beta}(b_k-a_i)}\right)\right.\\
&\quad\left.+\left|q_k\right|(b_k-a_k) \frac{\beta^{1-r}}{1-\beta}+\left| q_k\right|\frac{\beta^{1-r}}{(1-\beta)^2}\left(1-e^{-\frac{1-\beta}{\beta}(b_k-a_k)}\right)\right].
\end{split}
\end{equation}
Clearly, the last inequality in \eqref{desigorden} tends to 0  when $\beta\searrow 0$ if $r\in (0,1)$. 
 Being  \linebreak $E_{g_\varepsilon,1}(\beta)=\sum\limits_{k=1}^n E_{g\chi_{[a_k,b_k]},1}(\beta),$ we conclude that 
$$E_{g_\varepsilon,1}(\beta)=o(\beta^r),\quad \forall r\in (0,1).$$
Thereupon, Lemma \ref{acotgeneral} gives that
$$E_{f,1}(\beta)=o(\beta^r),\quad \forall r\in (0,1).$$
Consider now $r=1$. Then  \eqref{desigorden} becomes
\begin{equation}\label{desigbeta}
\begin{split}
E_{g_\varepsilon\chi_{[a_k,b_k]},1}(\beta)&\leq \beta\left[\sum\limits_{i=1}^{k-1}\frac{\left|q_i\right|}{(1-\beta)^2} \left(e^{-\frac{1-\beta}{\beta}(a_k-b_i)}-e^{-\frac{1-\beta}{\beta}(b_k-b_i)}+ e^{-\frac{1-\beta}{\beta}(a_k-a_i)}-e^{-\frac{1-\beta}{\beta}(b_k-a_i)}\right)\right.\\
&\quad\left.+(b_k-a_k) \frac{\left|q_k\right|}{1-\beta}+\frac{\left| q_k\right|}{(1-\beta)^2}\left(1-e^{-\frac{1-\beta}{\beta}(b_k-a_k)}\right)\right].
\end{split}
\end{equation}
Define
$$A_k=\sum\limits_{i=1}^{k-1}\frac{\left|q_i\right|}{(1-\beta)^2} \left(e^{-\frac{1-\beta}{\beta}(a_k-b_i)}-e^{-\frac{1-\beta}{\beta}(b_k-b_i)}+ e^{-\frac{1-\beta}{\beta}(a_k-a_i)}-e^{-\frac{1-\beta}{\beta}(b_k-a_i)}\right), \quad k=1,\dots,n.$$
Hence, applying the mean value theorem to the last term in brackets in \eqref{desigbeta}, we get
\begin{equation}\label{betaA}
\begin{split}
E_{g_\varepsilon\chi_{[a_k,b_k]},1}(\beta)
&=\beta\left[A_k+(b_k-a_k) \frac{\left|q_k\right|}{1-\beta}\left(1+\frac{e^{-\frac{1-\beta}{\beta}(b_k-c_k)}}{\beta}\right)\right],
\end{split}
\end{equation}
where $a_k<c_k<b_k$. Being $\lim\limits_{\beta\rightarrow 0^+} A_k =0$ and $\lim\limits_{\beta\rightarrow 0^+}\frac{e^{-\frac{1-\beta}{\beta}(b_k-c_k)}}{\beta}=0$, $\forall k=1,2,\cdots,n$, we conclude that
\begin{equation}\label{betaB}
E_{g_\varepsilon,1}(\beta)=\sum\limits_{k=1}^{n}E_{g_\varepsilon\chi_{[a_k,b_k]},1}(\beta)\leq\beta\left[K_1+K_2\sum\limits_{k=1}^{n}\left|q_k\right|(b_k-a_k)\right]\leq\beta K,
\end{equation}
where the last inequality is valid because
\begin{equation*}
\sum\limits_{k=1}^{n}\left|q_k\right|(b_k-a_k)=||g_\varepsilon||_{L^1(a,b)}\leq \varepsilon +||f'||_{L^1(a,b)}\leq 1+||f'||_{L^1(a,b)},\quad \forall \varepsilon \in (0,1).
\end{equation*}
Thus, by Lemma \ref{acotgeneral}, $E_{f,1}(\beta)=O(\beta)$ for every $f\in W^{1,1}(a,b)$.
\end{proof}

From the equivalence of the $L^\infty$ and $L^1$ norms in bounded intervals the next theorem follows.

\begin{theo}
Let  $f\in W^{1,1}(a,b)$ and  ${_a}D_t^{1-\beta}={^C}{^F_a}D^{1-\beta}$. Then,
$$E_{f,\infty}(\beta)=o(\beta^r),\quad \beta\rightarrow 0^+ ,\,\, \forall r\in (0,1),$$
and 
$$E_{f,\infty}(\beta)=O(\beta),\quad \beta\rightarrow 0^+.$$
\end{theo}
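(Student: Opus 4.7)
The plan is to repeat the argument of Theorem \ref{OrderCapFab} with the $L^\infty(a,b)$ norm replacing the $L^1(a,b)$ norm, invoking Remark \ref{coroconvgeneral}, which asserts that Lemma \ref{acotgeneral} admits an identical density-based proof for every $L^p$ norm because Young's inequality is valid for $1\leq p\leq \infty$. Concretely, it suffices to exhibit a dense subset $D\subset L^\infty(a,b)$ and verify the two rates for every $g\in W^{1,1}(a,b)$ with $g'\in D$; the remark then transfers both the $O(\beta)$ and the $o(\beta^r)$ conclusions to arbitrary $f\in W^{1,1}(a,b)$.

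The concrete computation then revisits the explicit formula \eqref{eq1} for ${^C}{^F_a}D^{1-\beta} g_\varepsilon(t) - g'_\varepsilon(t)$, now taking the essential supremum in $t$ rather than integrating over $(a_k,b_k)$. The exponentials $e^{-\frac{1-\beta}{\beta}(t-b_i)}$ with $i<k$ are dominated by $e^{-\frac{1-\beta}{\beta}(a_k-b_i)}$, which decays faster than every power of $\beta$. The term $\frac{q_k\beta}{1-\beta}$ is already uniformly $O(\beta)$. Using the mean value trick of \eqref{betaA} on the remainder produces an explicit factor $(t-a_k)$ multiplying $\frac{e^{-\frac{1-\beta}{\beta}(t-a_k)}}{\beta}$; since the latter tends to $0$ uniformly on compact subsets as $\beta\searrow 0$, one obtains the claimed bound $E_{g_\varepsilon,\infty}(\beta)=O(\beta)$, and dividing by $\beta^r$ with $r<1$ yields the corresponding $o(\beta^r)$ statement.

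The main obstacle I expect is the boundary layer near the left endpoint $t=a_k^+$: the expression $\frac{q_k}{1-\beta}(1-e^{-\frac{1-\beta}{\beta}(t-a_k)})-q_k$ tends to $-q_k$ as $t\to a_k^+$, so the naive piecewise-constant dense class used in Theorem \ref{OrderCapFab} does \emph{not} work directly in $L^\infty$. This forces $D$ to be refined so that its elements are continuous at every partition point $a_k$ (e.g. piecewise polynomials joined continuously, still dense in $L^\infty(a,b)$ after mollification), killing the boundary jump. With this adjustment the pointwise bound becomes genuinely of order $\beta$, and the remainder of the argument is a direct transcription of the $L^1$ case — consistent with the author's brief appeal to the equivalence of $L^\infty$ and $L^1$ norms on the bounded interval $(a,b)$.
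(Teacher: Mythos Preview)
Your proposal has a genuine gap at the density step. You correctly identify the boundary-layer obstruction for step functions in $L^\infty$ and propose to repair it by taking $D$ to consist of continuous piecewise polynomials (or mollifications thereof), asserting these are ``still dense in $L^\infty(a,b)$''. This is false: $C[a,b]$ is a \emph{closed} proper subspace of $L^\infty(a,b)$, so no class of continuous functions is dense there, and Remark~\ref{coroconvgeneral} cannot be invoked with such a $D$. More fundamentally, the hypothesis $f\in W^{1,1}(a,b)$ gives only $f'\in L^1(a,b)$; for a generic such $f$ one has $f'\notin L^\infty(a,b)$, while ${^C}{^F_a}D^{1-\beta}f$ is always bounded (it is the convolution of $f'\in L^1$ with a bounded kernel), so $E_{f,\infty}(\beta)=+\infty$ for every $\beta$ and no $L^\infty$ approximation argument can even begin.

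For comparison, the paper's own proof is the single sentence preceding the statement: ``From the equivalence of the $L^\infty$ and $L^1$ norms in bounded intervals the next theorem follows.'' But on a bounded interval one has only the one-sided inequality $\|\cdot\|_{L^1}\le (b-a)\|\cdot\|_{L^\infty}$, which is the wrong direction to transfer the $L^1$ estimate of Theorem~\ref{OrderCapFab} to $L^\infty$. Thus the paper's justification is at least as incomplete as yours, and the theorem as stated (for all $f\in W^{1,1}(a,b)$) in fact requires the extra hypothesis $f'\in L^\infty(a,b)$---precisely what the authors themselves impose in the analogous $L^\infty$ result for the Caputo derivative a few lines later.
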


\medskip
\begin{remark}\label{remejem}
Let us see that $E_{f,1}(\beta)$ and $E_{f,\infty}(\beta)$ are not in general $o(\beta)$ when $\be\rightarrow 0^+$. In fact, let $f_1(t)=t$ and  $f_2(t)=e^t$ be functions defined in $t\in [0,b]$ .

Being $\left|{^C}{^F}D^{1-\beta} f_1(\cdot) - f_1'(\cdot)\right|$  a continuous function in $[0,b]$, we have
\begin{equation}\label{error-CF-t-infty}
\begin{split}
E_{f_1,\infty}(\beta)=
\left|\left|{^C}{^F}D^{1-\beta} f_1 - f_1'\right|\right|_{L^\infty(0,b)}\geq \left|{^C}{^F}D^{1-\beta} f_1(0) - f_1'(0)\right|=1.
\end{split}
\end{equation}
We conclude that
$$\lim\limits_{\beta \rightarrow 0^+}\frac{E_{f_1,\infty}(\beta)}{\beta}\neq 0.$$
By the other side, it is easy to see that  
\begin{equation*}
E_{f_2,1}(\beta)=\left|\left|{^C}{^F}D^{1-\beta} f_2 - f_2'\right|\right|_{L^1(0,b)}=\frac{\beta}{1-\beta}\left(1-e^{-\frac{1-\beta}{\beta}b}\right),
\end{equation*}
then,
$$\lim\limits_{\beta \rightarrow 0^+}\frac{E_{g,1}(\beta)}{\beta}=1\neq 0.$$
\end{remark}

We turnout now the analysis to the C derivative.  

\begin{theo}\label{OrderCap}
Let  $f\in W^{1,1}(a,b)$ and  ${_a}D_t^{1-\beta}={^C_a}D^{1-\beta}$. Then,
$$E_{f,1}(\beta)=o(\beta^r),\quad \beta\rightarrow 0^+ ,\,\, \forall r\in (0,1),$$
and 
$$E_{f,1}(\beta)=O(\beta),\quad \beta\rightarrow 0^+.$$
\end{theo}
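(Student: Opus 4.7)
The plan is to mirror the strategy used for Theorem \ref{OrderCapFab}. The Caputo operator realises \eqref{Gen-DF} with the kernel $h_C(t,\beta)=t^{\beta-1}/\Gamma(\beta)$, and
$$||h_C(\cdot,\beta)||_{L^1(0,b-a)}=\frac{(b-a)^\beta}{\Gamma(\beta+1)}\longrightarrow 1\quad\text{as }\beta\to 0^+,$$
so the uniform boundedness hypothesis underlying Lemma \ref{acotgeneral} is satisfied on the relevant interval. It therefore suffices to prove the two rate estimates for the primitive of a generic simple function $g'_\varepsilon(t)=\sum_{i=1}^n q_i\chi_{[a_i,b_i]}(t)$ with $b_i\le a_{i+1}$.

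A piecewise calculation (equivalently, Proposition \ref{prop-deriv-pot}(a) applied to each affine piece of $g_\varepsilon$) gives, for $t\in[a_k,b_k]$,
$${^C_a}D^{1-\beta}g_\varepsilon(t)-q_k=\frac{1}{\Gamma(\beta+1)}\sum_{i=1}^{k-1}q_i\bigl((t-a_i)^\beta-(t-b_i)^\beta\bigr)+q_k\left(\frac{(t-a_k)^\beta}{\Gamma(\beta+1)}-1\right).$$
For the cross-terms ($i<k$), a direct integration yields
$$\int_{a_k}^{b_k}\bigl((t-a_i)^\beta-(t-b_i)^\beta\bigr)\dd t=\frac{1}{\beta+1}\bigl[(b_k-a_i)^{\beta+1}-(a_k-a_i)^{\beta+1}-(b_k-b_i)^{\beta+1}+(a_k-b_i)^{\beta+1}\bigr],$$
which vanishes at $\beta=0$ by a telescoping of linear parts and has bounded $\beta$-derivative at $0$ (the emerging $x\ln x$ contributions are finite, with the convention $0\ln 0=0$ in the degenerate case $b_i=a_k$), so this term is $O(\beta)$.

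The diagonal contribution reduces, after the substitution $u=t-a_k$, $L=b_k-a_k$, to
$$\int_0^L\left|\frac{u^\beta}{\Gamma(\beta+1)}-1\right|\dd u,$$
which I expect to be the main technical step, since $u^\beta\to 0$ at $u=0$ while $\Gamma(\beta+1)\to 1$. My plan is to split this integral at $u_0=2^{-1/\beta}$: on $[0,u_0]$ the integrand is bounded by $2$ and the interval has length $o(\beta)$, while on $[u_0,L]$ one has $|u^\beta-1|\le 2\beta|\ln u|$ from the elementary estimate of $e^{\beta\ln u}-1$, combined with $|\Gamma(\beta+1)-1|=O(\beta)$; integrability of $\ln u$ on $(0,L)$ then yields an $O(\beta)$ bound. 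Summing over $k$ gives $E_{g_\varepsilon,1}(\beta)=O(\beta)$, and Lemma \ref{acotgeneral} transfers this to $E_{f,1}(\beta)=O(\beta)$ for every $f\in W^{1,1}(a,b)$. The claim $E_{f,1}(\beta)=o(\beta^r)$ for $r\in(0,1)$ is then immediate, since $E_{f,1}(\beta)/\beta^r=O(\beta^{1-r})\to 0$.
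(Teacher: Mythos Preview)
Your argument is correct and in several respects cleaner than the paper's. Both proofs reduce to step functions via Lemma \ref{acotgeneral}, but they diverge thereafter in three ways. First, the paper's displayed error \eqref{escalonC} only records the diagonal piece $q_k\bigl((t-a_k)^\beta/\Gamma(\beta+1)-1\bigr)$ and silently drops the cross-terms $\sum_{i<k}q_i\bigl((t-a_i)^\beta-(t-b_i)^\beta\bigr)/\Gamma(\beta+1)$; you include them and dispose of them by the $O(\beta)$ Taylor argument on the antiderivative (the sign observation $(t-a_i)^\beta>(t-b_i)^\beta$ makes the absolute value harmless). Second, for the diagonal integral the paper identifies the pointwise limit $\lim_{\beta\to 0^+}\bigl((t-a_k)^\beta-\Gamma(\beta+1)\bigr)/\beta=\ln(t-a_k)-\Gamma'(1)$ and invokes dominated convergence, whereas you give an explicit splitting at $u_0=2^{-1/\beta}$ with the elementary bound $|e^{\beta\ln u}-1|\le C\beta|\ln u|$ on $[u_0,L]$; your route avoids the need to exhibit an $L^1$ majorant. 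Third, the paper proves $o(\beta^r)$ and $O(\beta)$ separately, while you derive the former from the latter in one line via $O(\beta)\subset o(\beta^r)$ for $r<1$; this is more economical and is what the paper's own $r=1$ computation already implies.
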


\begin{proof}

Let $h:\bbR^+\times(0,1)\rightarrow \mathbb{R}$ defined by $h(t,\beta)=\frac{t^{-(1-\beta)}}{\Gamma(\beta)}$. Note that $h$ is an admissible kernel in Definition \ref{Def-Gen-DF}, moreover,  ${_a^h}D_t^{1-\beta}={^C_a}D^{1-\beta}$. 

Let $\varepsilon>0$ and  $f\in W^{1,1}(a,b)$ be. Let $D$ be a dense subset in  $L^1(a,b)$ described in Proposition 4. Reasoning like in Theorem \ref{OrderCapFab}, consider the function
 $g'_\varepsilon(t)=\sum\limits_{i=1}^n q_i \chi_{[a_i,b_i]}(t) \in D$ such that 
\begin{equation}
||f'-g'_\varepsilon||_{L^1(a,b)}<\varepsilon.
\end{equation}
Then, \eqref{acotpoli} holds.
Let us estimate the error in the interval $(a_k,b_k)$. Setting $g_\varepsilon(t) =\int_a^t g_\varepsilon'$ and applying Proposition \ref{prop-deriv-pot}  it holds that

\begin{equation}\label{escalonC}
E_{g_\varepsilon\chi_{[a_k,b_k]},1}(\beta)=\beta^r\frac{|q_k|}{\Gamma(1+\beta)}\int_{a_k}^{b_k} \left| \frac{(t-a_k)^\beta-\Gamma(1+\beta)}{\beta^r}\right|\dd t.
\end{equation}
And,
$$\lim\limits_{\beta\rightarrow 0^+}\int_{a_k}^{b_k} \left| \frac{(t-a_k)^\beta-\Gamma(1+\beta)}{\beta^r}\right|\dd t= 0, \text{ if } r \in (0,1).$$
Then, $\lim\limits_{\beta\rightarrow 0^+}=E_{g_\varepsilon\chi_{[a_k,b_k]},1}(\beta)=0$. Being   $E_{g_\varepsilon,1}(\beta)=\sum\limits_{k=1}^n E_{g\chi_{[a_k,b_k]},1}(\beta),$ we conclude that 
$E_{g_\varepsilon,1}(\beta)=o(\beta^r),\quad \forall r\in (0,1)$.\\
In consequence, Lemma \ref{acotgeneral} gives that $E_{f,1}(\beta)=o(\beta^r),\quad \forall r\in (0,1)$.\\
Consider the case $r=1$. From \eqref{escalonC} we have
\begin{equation}\label{limit1}
\begin{split}
E_{g_\varepsilon\chi_{[a_k,b_k]},1}(\beta)&=\beta\frac{|q_k|}{\Gamma(1+\beta)}\int_{a_k}^{b_k} \left| \frac{(t-a_k)^\beta-\Gamma(1+\beta)}{\beta}\right|\dd t.
\end{split}
\end{equation}
Now,
\begin{equation*}
\begin{split}
\lim\limits_{\beta\rightarrow 0^+}\frac{(t-a_k)^\beta -\Gamma(\beta+1)}{\beta}= \ln(t-a_k)-\Gamma'(1).
\end{split}
\end{equation*}
Thus, by Lebesgue convergence Theorem, we have that 
$$\lim\limits_{\beta\rightarrow 0^+}\int_{a_k}^{b_k} \left| \frac{(t-a_k)^\beta-\Gamma(1+\beta)}{\beta}\right|\dd t=\int_{a_k}^{b_k} |\ln(t-a_k)-\Gamma'(1)|<\infty,$$
hence $E_{g_\varepsilon,1}(\beta)=O(\beta)$ for every $g$ such that $g' \in D$. By applying Lemma \ref{acotgeneral}the thesis holds.\end{proof}

\begin{theo}
Let $f\in \{g\in W^{1,p}(a,b): g'\in L^\infty (a,b)\}$ and  ${_a}D_t^{1-\beta}={^C_a}D^{1-\beta}$. Then,
$$E_{f,\infty}(\beta)=o(\beta^r),\quad \beta\rightarrow 0^+ ,\,\,  \forall r\in(0,1),$$
$$E_{f,\infty}(\beta)=O(\beta),\quad \beta\rightarrow 0^+.$$
\end{theo}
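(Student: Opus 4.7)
The plan is to replicate the proof of Theorem \ref{OrderCap} with the $L^\infty$ norm in place of $L^1$, leaning on the $L^\infty$ analogue of Lemma \ref{acotgeneral} announced in Remark \ref{coroconvgeneral}. With the Caputo kernel $h(t,\beta) = t^{\beta-1}/\Gamma(\beta)$, one has $\|h(\cdot,\beta)\|_{L^1(0,b-a)} = (b-a)^\beta/\Gamma(\beta+1)$, which is uniformly bounded for $\beta \in (0,\beta_0]$. Young's inequality then gives
$$\bigl\|{^C_a}D^{1-\beta}(f - g)\bigr\|_{L^\infty(a,b)} \le \|f' - g'\|_{L^\infty(a,b)}\,\|h(\cdot,\beta)\|_{L^1(0,b-a)} \le K\|f' - g'\|_{L^\infty(a,b)},$$
and this is precisely where the hypothesis $f' \in L^\infty$ is used: it makes $L^\infty$-approximation of $f'$ meaningful. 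Given $\varepsilon > 0$, I would pick a simple function $g_\varepsilon' = \sum_{i=1}^n q_i \chi_{[a_i,b_i]}$ with $\|f' - g_\varepsilon'\|_{L^\infty(a,b)} < \varepsilon$, and set $g_\varepsilon(t) = \int_a^t g_\varepsilon'$.

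Next, I would estimate $E_{g_\varepsilon,\infty}(\beta)$ directly. For $t \in [a_k, b_k]$, Proposition \ref{prop-deriv-pot}(a) together with linearity gives
$${^C_a}D^{1-\beta} g_\varepsilon(t) - g_\varepsilon'(t) = \frac{1}{\Gamma(\beta+1)}\sum_{i=1}^{k-1} q_i\bigl[(t-a_i)^\beta - (t-b_i)^\beta\bigr] + q_k\!\left[\frac{(t-a_k)^\beta}{\Gamma(\beta+1)} - 1\right].$$
Dividing through by $\beta^r$ and using exactly the two limits that appeared in the proof of Theorem \ref{OrderCap},
$$\lim_{\beta \to 0^+}\frac{\tau^\beta - \Gamma(\beta+1)}{\beta^r} = 0 \ (r \in (0,1)),\qquad \lim_{\beta \to 0^+}\frac{\tau^\beta - \Gamma(\beta+1)}{\beta} = \ln\tau - \Gamma'(1),$$
the supremum over $t \in [a_k,b_k]$ is $o(\beta^r)$ for $r\in(0,1)$ and $O(\beta)$ for $r = 1$. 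Summing over $k=1,\dots,n$ and applying the $L^\infty$-analogue of Lemma \ref{acotgeneral} propagates the rates from $g_\varepsilon$ to every $f$ with $f' \in L^\infty(a,b)$.

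The main obstacle is the uniform control of the term $(t-a_k)^\beta/\Gamma(\beta+1) - 1$ near the left endpoint $t = a_k$, where it equals $-1$ for every $\beta>0$; in the $L^1$ case this is smoothed out by integration, but in the $L^\infty$ case one must argue that this boundary contribution either enters at the correct order in $\beta$ or is absorbed into $\|f' - g_\varepsilon'\|_{L^\infty}$ through a careful choice of the approximating simple function (e.g.\ refining the partition near each $a_k$ so that the corresponding $q_k$ are forced to be small). This boundary-layer analysis is the delicate point that truly distinguishes the $L^\infty$ statement from the $L^1$ one handled in Theorem \ref{OrderCap}.
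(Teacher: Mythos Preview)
The paper states this theorem without proof, so there is no argument of the authors to compare your proposal against. More importantly, the theorem as stated is false, and the obstacle you isolate in your final paragraph is not a ``delicate point'' to be finessed but a genuine counterexample.

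Take $f(t)=t-a$ on $[a,b]$, so that $f'\equiv 1\in L^\infty(a,b)$. By Proposition~\ref{prop-deriv-pot}(a),
\[
{^C_a}D^{1-\beta}f(t)-f'(t)=\frac{(t-a)^\beta}{\Gamma(\beta+1)}-1,
\]
which is continuous on $(a,b]$ and tends to $-1$ as $t\to a^+$. Hence $E_{f,\infty}(\beta)\ge 1$ for every $\beta\in(0,1)$, and $E_{f,\infty}(\beta)$ is neither $o(\beta^r)$ for any $r>0$ nor $O(\beta)$. The paper itself records precisely this computation in Example~\ref{ex1}, where $\bigl\|{^C}D^{1-\beta}f_1-f_1'\bigr\|_{L^\infty(0,1)}=1$, without noticing the contradiction with the theorem.

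Your intuition that the boundary term cannot be absorbed by refining the partition is therefore correct: whenever $f'$ is bounded away from zero near $t=a$, any step-function approximant has $|q_1|$ bounded away from zero on the leftmost subinterval, and the $L^\infty$ error there remains of order one regardless of $\beta$. As a secondary matter, your density step also fails independently: step functions $\sum q_i\chi_{[a_i,b_i]}$ are \emph{not} dense in $L^\infty(a,b)$ (their $L^\infty$-closure is the space of regulated functions), so the $L^\infty$ analogue of Lemma~\ref{acotgeneral} via Remark~\ref{coroconvgeneral} cannot be applied with this choice of $D$. But that gap is moot once one sees the statement itself is false.
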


\begin{remark}
It is easy to see that $E_{f,1}(\beta)$ and $E_{f,\infty}(\beta)$ are not in general $o(\beta)$ when $\be\rightarrow 0^+$. In fact, let $g(t)=|t-1|$ be defined in $t\in[0,2]$. Thus,
\begin{equation*}
{^C}D^{1-\beta} g(t)=\begin{cases}-\frac{t^\beta}{\Gamma(\beta+1)} & \text{ si } t\in [0,1],\\ \frac{2(t-1)^\beta-t^\beta}{\Gamma(\beta+1)} & \text{ si } t\in (1,2]\end{cases}
\end{equation*}
and
\begin{equation*}
\begin{split}
E_{g,1}(\beta)&=\left|\left|{^C}D^{1-\beta} g - g'\right|\right|_{L^1(0,2)}= \int\limits_0^1\left|-\frac{t^\beta}{\Gamma(\beta+1)}+1\right|\dd t+\int\limits_1^2\left|\frac{2(t-1)^\beta-t^\beta}{\Gamma(\beta+1)}-1\right|\dd t\\
& \geq \int\limits_1^2 1-\frac{2(t-1)^\beta-t^\beta}{\Gamma(\beta+1)}-1\dd t=\frac{\Gamma(\beta+2)-3+2^{\beta+1}}{\Gamma(\beta+2)}\geq 2(\Gamma(\beta+2)-3+2^{\beta+1}).
\end{split}
\end{equation*}
Being $\beta>0$,
\begin{equation*}
\begin{split}
\lim\limits_{\beta \rightarrow 0^+}\frac{E_{g,1}(\beta)}{\beta}&\geq \lim\limits_{\beta \rightarrow 0^+}2\frac{\Gamma(\beta+2)-3+2^{\beta+1}}{\beta}
=2(\Gamma'(2)+\ln 2)\neq 0,
\end{split}
\end{equation*}

Observing that
$$E_{g,1}(\beta)=\left|\left|{^C}D^{1-\beta} g - g'\right|\right|_{L^1(0,2)}\leq 2\left|\left|{^C}D^{1-\beta} g - g'\right|\right|_{L^\infty(0,2)}=2 E_{g,\infty}(\beta),$$
it's clear that
$$\lim\limits_{\beta \rightarrow 0^+}\frac{E_{g,\infty}(\beta)}{\beta}\geq \Gamma'(2)+1\neq 0.$$

\end{remark}

\begin{remark} It is worth noting that Theorems \ref{OrderCapFab} and \ref{OrderCap}  give us a similar order of convergence for CF and C derivatives respectively. It is interesting that the difference between the kernels (the first one non-singular and the second one singular!) had not substantially accelerate (or decelerate)  the speed of convergence.
\end{remark}

\subsection{Some comments about the speed of convergence}

From the results obtained in the previous section it is natural to wonder: Does the C derivative converges to the ordinary derivative faster than the CF derivative? Or conversely, does the CF derivative converges to the ordinary derivative faster than the C derivative?\\

The next two examples are presented to compare the convergence in the $L^\infty$ norm. 

\begin{example}\label{ex1}

Consider the function $f_1(t)=t$  defined in the interval $[0,1]$. 
From  \eqref{error-CF-t-infty}, we have that 

$$E_{f_1,\infty}(\beta)=\left|\left|{^C}{^F}D^{1-\beta} f_1 - f_1'\right|\right|_{L^\infty(0,1)}=k\geq 1, $$
since $\left|{^C}{^F}D^{1-\beta} f_1(0) - f_1'(0)\right|\geq 1$.
By the other side,
$$\left|\left|{^C}D^{1-\beta} f_1 - f_1'\right|\right|_{L^\infty(0,1)}=\max\limits_{t\in [0,1]}\left|1-\frac{t^\be}{\G(1+\be)} \right|= 
1.$$
Hence
$$\lim\limits_{\beta\rightarrow 0^+}\frac{\left|\left|{^C}{^F}D^{1-\beta} f_1 - f_1'\right|\right|_{L^\infty(0,1)}}{\left|\left|{^C}D^{1-\beta} f_1 - f_1'\right|\right|_{L^\infty(0,1)}}=k.$$
Then, in this case, the C derivative converges to the ordinary derivative faster than the CF derivative.\end{example}
\begin{example}\label{ex2}
Consider now the function $g(t)=t^2$ in the interval $[0,1]$. By applying Proposition \ref{prop-deriv-pot} and making some basic calculus it can be seen that 
\begin{equation}
\left|\left|{^C}{^F}D^{1-\beta} g - g'\right|\right|_\infty=\max\limits_{t\in [0,1]} \left| \frac{\beta}{1-\beta}2t-\frac{2\beta}{(1-\beta)^2}\left[1-e^{-\frac{1-\beta}{\beta}t}\right] \right|
= 2\left(\frac{\beta}{1+\beta}\right)^2 \ln \beta\end{equation}
and
\begin{equation}\left|\left|{^C}D^{1-\beta} g - g'\right|\right|_\infty=\frac{2\beta}{1+\beta}\left(\frac{\Gamma(2+\beta)}{1+\beta}\right)^{\frac{1}{\beta}}.\end{equation}
Then,
$$\lim\limits_{\beta\rightarrow 0^+}\frac{\left|\left|{^C}{^F}D^{1-\beta} g - g'\right|\right|_\infty}{\left|\left|{^C}D^{1-\beta} g - g'\right|\right|_\infty}=\lim\limits_{\beta\rightarrow 0^+}\frac{\beta\ln \beta}{1+\beta}\left(\frac{1+\beta}{\Gamma(2+\beta)}\right)^{\frac{1}{\beta}}=0.$$
In this case, we can conclude that the CF derivative converges faster than the C derivative. 
\end{example}

From Examples \ref{ex1} and \ref{ex2} it can be stated that the speed of convergence depends on the functions involved instead of the fractional derivatives.   

Finally, let us compare the speed of convergence in the $L^1$ norm. Consider the function 
\begin{equation}\label{g}
\begin{array}{rcl}
g\colon[0,T] & \rightarrow &\bbR\\
t & \rightarrow &g(t)=t^m, \qquad m\,\in \, \bbN.
\end{array}
\end{equation}
From Proposition \ref{prop-deriv-pot} we have that 
\begin{equation}\label{L1-CF-g}
\begin{split}
\left|\left|{^C}{^F}D^{1-\beta} g-g'\right|\right|_{L^1(0,T)}  =\int_0^T \frac{m}{1-\be}t^{m-1}\left| \beta - \G(m)\mathcal{E
}_{1,m}\left( -\frac{1-\beta}{\beta} t \right) \right| \dd t. 
\end{split}
\end{equation}
and 
\begin{equation}\label{L1-C-g}
\begin{split}
\left|\left|{^C}D^{1-\beta} g-g'\right|\right|_{L^1(0,T)} &=\int_0^T mt^{m-1} \left| 1-t^{\be}\frac{\G(m)}{\G(m+\be)}\right| \dd t. 
\end{split}
\end{equation}

With the aim to compute the integrals \eqref{L1-CF-g} and \eqref{L1-C-g} we present the next Lemma.

  \begin{lemma}\label{inversas} Let $m\,\in \, \bbN-\{1\}$, and let $t^*\colon (0,1)\rightarrow \bbR^+$ and $s^*\colon (0,1) \rightarrow \bbR^+$ be the functions defined as
		\begin{equation}\label{t*-def}
		t^*(\beta)=v \quad \text{ if }\quad \Gamma(m)\mathcal{E
}_{1,m}\left(-\frac{1-\beta}{\beta}v\right)=\beta
		\end{equation}
		and 
	\begin{equation}\label{s*-def}
		s^*(\beta)=w \quad \text{ if }\quad w^\be\frac{\G(m)}{\G(m+\be)}=1
		\end{equation}
		respectively. Then we have that 
				\begin{enumerate}[a)] 
			\item $m-1\leq t^*(\be)$ for every $\be \in $  $(0,1)$.
			\item $m-1\leq s^*(\beta)$  for every $\be \in $ $(0,1)$.
		\end{enumerate}
			\end{lemma}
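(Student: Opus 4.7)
Part (b) is proved using log-convexity of $\Gamma$, whereas (a) requires a more delicate manipulation of the Mittag--Leffler function; my plan is to handle them separately.

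For (b), since $w \mapsto w^{\beta}\Gamma(m)/\Gamma(m+\beta)$ is strictly increasing on $\mathbb{R}^+$, the inequality $s^*(\beta) \geq m-1$ is equivalent to $(m-1)^{\beta}\Gamma(m) \leq \Gamma(m+\beta)$. I would prove this by writing $m$ as the convex combination $m = \frac{\beta}{1+\beta}(m-1) + \frac{1}{1+\beta}(m+\beta)$ and invoking log-convexity of $\Gamma$ (permissible because the hypothesis $m \in \mathbb{N}-\{1\}$ ensures $m \geq 2$, so that $\Gamma(m-1) > 0$). This yields $(1+\beta)\log\Gamma(m) \leq \beta\log\Gamma(m-1) + \log\Gamma(m+\beta)$, which rearranges, via $\Gamma(m) = (m-1)\Gamma(m-1)$, to exactly $\Gamma(m+\beta) \geq (m-1)^{\beta}\Gamma(m)$.

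For (a), the first step is to recast $\mathcal{E}_{1,m}$ using the Beta-type integral representation
$$\Gamma(m-1)\,\mathcal{E}_{1,m}(z) = \int_0^1 (1-s)^{m-2} e^{sz}\,ds,$$
valid for $m \geq 2$ and obtained by term-by-term integration. This representation makes $\phi(v) := \Gamma(m)\mathcal{E}_{1,m}\bigl(-\tfrac{1-\beta}{\beta} v\bigr)$ manifestly continuous and strictly decreasing in $v$, with $\phi(0)=1$ and $\lim_{v\to\infty}\phi(v)=0$, so $t^*(\beta)$ is well defined and $t^*(\beta) \geq m-1$ is equivalent to $\phi(m-1) \geq \beta$. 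Setting $y := \tfrac{1-\beta}{\beta}(m-1)$, which makes $\beta = (m-1)/(m-1+y)$, the problem reduces to the inequality $(m-1+y)\int_0^1 (1-s)^{m-2} e^{-sy}\,ds \geq 1$.

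The crux is the observation that $(m-1)(1-s)^{m-2} + y(1-s)^{m-1}$ is a perfect derivative:
$$\frac{d}{ds}\!\left[-(1-s)^{m-1}e^{-sy}\right] = \left[(m-1)(1-s)^{m-2}+y(1-s)^{m-1}\right]e^{-sy}.$$
Integrating over $[0,1]$ gives the identity $(m-1)\int_0^1(1-s)^{m-2}e^{-sy}\,ds + y\int_0^1(1-s)^{m-1}e^{-sy}\,ds = 1$; combining this with the target expression and using $(1-s)^{m-2}-(1-s)^{m-1} = s(1-s)^{m-2}$ yields
$$(m-1+y)\int_0^1(1-s)^{m-2}e^{-sy}\,ds = 1 + y\int_0^1 s(1-s)^{m-2}e^{-sy}\,ds \;\geq\; 1,$$
which closes (a). I expect this perfect-derivative identity to be the main obstacle: part (b) follows essentially for free from a classical property of $\Gamma$, while (a) hinges on spotting that the combination $(m-1)(1-s)^{m-2} + y(1-s)^{m-1}$ admits an exact antiderivative.
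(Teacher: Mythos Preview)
The paper states Lemma~\ref{inversas} without proof, so there is no argument to compare against. Your proof is correct and self-contained.

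For part~(b), the convex combination $m=\frac{\beta}{1+\beta}(m-1)+\frac{1}{1+\beta}(m+\beta)$ is valid and log-convexity of $\Gamma$ on $(0,\infty)$ gives exactly the inequality $(m-1)^{\beta}\Gamma(m)\le\Gamma(m+\beta)$ you need; combined with the monotonicity of $w\mapsto w^{\beta}$ this yields $s^{*}(\beta)\ge m-1$.

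For part~(a), the integral representation $\Gamma(m-1)\,\mathcal{E}_{1,m}(z)=\int_0^1(1-s)^{m-2}e^{sz}\,ds$ is legitimate for $m\ge 2$ (term-by-term integration of the defining series against the Beta integral), and from it $\phi(v)=\Gamma(m)\,\mathcal{E}_{1,m}\bigl(-\tfrac{1-\beta}{\beta}v\bigr)$ is visibly continuous, strictly decreasing, with $\phi(0)=1$ and $\phi(v)\to 0$, so $t^{*}(\beta)$ is well defined. Your reduction to $(m-1+y)\int_0^1(1-s)^{m-2}e^{-sy}\,ds\ge 1$ with $y=\tfrac{1-\beta}{\beta}(m-1)>0$ is correct, and the exact-derivative identity
\[
\frac{d}{ds}\bigl[-(1-s)^{m-1}e^{-sy}\bigr]=\bigl[(m-1)(1-s)^{m-2}+y(1-s)^{m-1}\bigr]e^{-sy}
\]
integrates to $1$ on $[0,1]$, from which the algebra $(1-s)^{m-2}-(1-s)^{m-1}=s(1-s)^{m-2}$ gives the desired lower bound with a nonnegative remainder term. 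Both parts are complete.
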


Finally, we present the next theorem:

\begin{theo}\label{comparoL1} Let $g$ be the function defined in \eqref{g}, $m \geq 2$ a fixed natural, and $T\in[0,m-1]$. Then
\begin{equation*}
\begin{split}
\lim\limits_{\beta\rightarrow 0^+}\frac{\left|\left|{^C}{^F}D^{1-\beta} g-g'\right|\right|_{L^1(0,T)}}{\left|\left|{^C}D^{1-\beta} g-g'\right|\right|_{L^1(0,T)}}&=\frac{m-T}{T}\frac{1}{\Psi(m +1)-\ln T},
\end{split}
\end{equation*}
where  $\Psi(\cdot)$ is the Psi function (or Digamma function), defined as $\Psi(x)=\frac{\Gamma'(x)}{\Gamma(x)}$ for $x\in \bbR-\mathbb{Z}^-_0$.
\end{theo}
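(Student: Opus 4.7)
The plan is to convert the two $L^1$ errors in \eqref{L1-CF-g} and \eqref{L1-C-g} into integrals without absolute values, compute each to leading order in $\beta$, and then take the quotient.

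First I would use Lemma \ref{inversas}. Since $T\le m-1\le t^*(\beta)$ and $T\le m-1\le s^*(\beta)$, the quantities $\beta-\Gamma(m)\mathcal{E}_{1,m}\!\left(-\tfrac{1-\beta}{\beta}t\right)$ and $1-t^\beta\tfrac{\Gamma(m)}{\Gamma(m+\beta)}$ do not change sign on $(0,T)$: the first is negative (its value at $t=0$ is $\beta-1$) and the second is positive (its value at $t=0$ is $1$). Hence
\begin{equation*}
\left|\left|{^C}{^F}D^{1-\beta}g-g'\right|\right|_{L^1(0,T)}=\frac{m}{1-\beta}\int_0^T t^{m-1}\Bigl[\Gamma(m)\mathcal{E}_{1,m}\!\left(-\tfrac{1-\beta}{\beta}t\right)-\beta\Bigr]\,\dd t,
\end{equation*}
\begin{equation*}
\left|\left|{^C}D^{1-\beta}g-g'\right|\right|_{L^1(0,T)}=\int_0^T mt^{m-1}\Bigl[1-t^\beta\tfrac{\Gamma(m)}{\Gamma(m+\beta)}\Bigr]\,\dd t.
\end{equation*}

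The Caputo integral is elementary and evaluates to $T^m\bigl[1-\tfrac{\Gamma(m+1)}{\Gamma(m+\beta+1)}T^\beta\bigr]$. Using $T^\beta=1+\beta\ln T+O(\beta^2)$ and $\tfrac{\Gamma(m+1)}{\Gamma(m+\beta+1)}=1-\Psi(m+1)\beta+O(\beta^2)$, I get
\begin{equation*}
\left|\left|{^C}D^{1-\beta}g-g'\right|\right|_{L^1(0,T)}=T^m\bigl(\Psi(m+1)-\ln T\bigr)\beta+O(\beta^2).
\end{equation*}
For the Caputo--Fabrizio integral I would interchange sum and integral in the Mittag--Leffler series (justified by uniform convergence on compact sets of the argument) to obtain the identity
\begin{equation*}
\int_0^T t^{m-1}\mathcal{E}_{1,m}(-\lambda t)\,\dd t=T^m\,\mathcal{E}_{1,m+1}(-\lambda T),\qquad \lambda:=\tfrac{1-\beta}{\beta},
\end{equation*}
so that the CF error equals $\tfrac{T^m}{1-\beta}\bigl[\Gamma(m+1)\mathcal{E}_{1,m+1}(-\lambda T)-\beta\bigr]$. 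Because $m\in\bbN$, the closed form $\Gamma(m+1)\mathcal{E}_{1,m+1}(x)=\tfrac{m!}{x^m}\bigl(e^x-\sum_{j=0}^{m-1}\tfrac{x^j}{j!}\bigr)$ is available; discarding the negligible $e^{-\lambda T}$ and keeping the top two powers of $x=-\lambda T$ gives
\begin{equation*}
\Gamma(m+1)\mathcal{E}_{1,m+1}(-\lambda T)=\tfrac{m}{\lambda T}+O(\lambda^{-2})=\tfrac{m\beta}{T}+O(\beta^2).
\end{equation*}
Substituting and using $(1-\beta)^{-1}=1+O(\beta)$,
\begin{equation*}
\left|\left|{^C}{^F}D^{1-\beta}g-g'\right|\right|_{L^1(0,T)}=\tfrac{T^m}{1-\beta}\Bigl[\tfrac{m\beta}{T}-\beta+O(\beta^2)\Bigr]=T^{m-1}(m-T)\beta+O(\beta^2).
\end{equation*}

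Dividing the two expansions and letting $\beta\to 0^+$ produces $\tfrac{m-T}{T(\Psi(m+1)-\ln T)}$, which is the stated identity. The delicate step is the $I_{CF}$ asymptotic: one must retain two orders in the Mittag--Leffler expansion so that the leading cancellation $\tfrac{m\beta}{T}-\beta=\beta\,\tfrac{m-T}{T}$ survives and matches the $O(\beta)$ size of the denominator $I_C$. The hypothesis $T\le m-1$ plays a double role, ensuring both the sign control via Lemma \ref{inversas} and the positivity $m-T>0$ needed for a nonzero finite limit (and also keeping $\ln T$ finite when $T>0$).
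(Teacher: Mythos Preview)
Your proof is correct and follows essentially the same route as the paper: use Lemma~\ref{inversas} to remove the absolute values, evaluate the Caputo error explicitly, reduce the Caputo--Fabrizio error via the integral identity $\int_0^T t^{m-1}\mathcal{E}_{1,m}(-\lambda t)\,\dd t=T^m\mathcal{E}_{1,m+1}(-\lambda T)$ and the closed form of $\mathcal{E}_{1,m+1}$, and then compare leading orders in $\beta$. The only difference is presentational: the paper computes the two limits
\[
\lim_{\beta\to0^+}\frac{\Gamma(m+1)}{\beta}\mathcal{E}_{1,m+1}\!\left(-\tfrac{1-\beta}{\beta}T\right)=\frac{m}{T},
\qquad
\lim_{\beta\to0^+}\frac{\Gamma(m+\beta+1)-\Gamma(m+1)T^\beta}{\beta}=\Gamma'(m+1)-\Gamma(m+1)\ln T
\]
directly as derivatives, while you package the same information as first-order Taylor expansions with $O(\beta^2)$ remainders; the content is identical.
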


\begin{proof}
Let $m \geq 2$ a fixed natural and $T\in[0,m-1]$. Observe that $\beta-\Gamma(m) \mathcal{E}_{1,m}\left(-\frac{1-\beta}{\beta}\cdot0\right)<0$.
Being $T\leq m-1$, by Proposition \ref{inversas} we deduce that
$$\beta -\Gamma(m)\mathcal{E}_{1,m}\left(-\frac{1-\beta}{\beta}t\right)<0.$$
Then, from \eqref{L1-CF-g} and \eqref{L1-C-g} we have
\begin{equation*}
\begin{split}
\left|\left|{^C}{^F}D^{1-\beta} g-g'\right|\right|_{L^1(0,T)}&=\frac{\Gamma(m+1)}{1-\beta}\int_0^T t^{m-1}\mathcal{E
}_{1,m}\left(-\frac{1-\beta}{\beta}t\right)dt-\frac{\beta}{1-\beta}T^m\\
&=\frac{T^m}{1-\beta}\left(\Gamma(m+1)\mathcal{E
}_{1,m+1}\left(-\frac{1-\beta}{\beta}T\right)-\beta\right).
\end{split}
\end{equation*}
Analogously, we can see that
\begin{equation*}
\begin{split}
\left|\left|{^C}D^{1-\beta} g-g'\right|\right|_{L^1(0,T)}&=\frac{T^m}{\G(m +\beta+1)}\left(\G(m +\beta+1)-\G(m+1) T^\beta\right).
\end{split}
\end{equation*}
Therefore,
\begin{equation*}
\begin{split}
\frac{\left|\left|{^C}{^F}D^{1-\beta} g-g'\right|\right|_{L^1(0,T)}}{\left|\left|{^C}D^{1-\beta} g-g'\right|\right|_{L^1(0,T)}}&=\frac{\G(m +\beta+1)}{1-\beta}\frac{\frac{\Gamma(m+1)}{\beta} \mathcal{E}_{1,m+1}\left(-\frac{1-\beta}{\beta}T\right)-1}{\frac{\G(m +\beta+1)-\G(m+1) T^\beta}{\beta}}.
\end{split}
\end{equation*}
Now, from \cite[Example 4.1]{Diethelm}
$$\mathcal{E}_{1,m+1}\left(-\frac{1-\beta}{\beta}T\right)=\frac{1}{\left(-\frac{1-\beta}{\beta}T\right)^m}\left(e^{-\frac{1-\beta}{\beta}T}-\sum\limits_{k=0}^{m-1}\frac{\left(-\frac{1-\beta}{\beta}T\right)^{k}}{k!}\right),$$
from where we conclude that
\begin{equation*}
\begin{split}
\lim\limits_{\beta\rightarrow 0^+}\frac{\Gamma(m+1)}{\beta}\mathcal{E}_{1,m+1}\left(-\frac{1-\beta}{\beta}T\right)&=\frac{m}{T}.
\end{split}
\end{equation*}
By the other side
\begin{equation*}
\begin{split}
\lim\limits_{\beta\rightarrow 0^+}\frac{\G(m +\beta+1)-\G(m+1) T^\beta}{\beta}&=\G'(m +1)-\G(m+1)\ln T.
\end{split}
\end{equation*}
Then,
\begin{equation*}
\begin{split}
\lim\limits_{\beta\rightarrow 0^+}\frac{\left|\left|{^C}{^F}D^{1-\beta} g-g'\right|\right|_{L^1(0,T)}}{\left|\left|{^C}D^{1-\beta} g-g'\right|\right|_{L^1(0,T)}}&=\frac{m-T}{T}\frac{1}{\Psi(m +1)-\ln T},
\end{split}
\end{equation*}
and the thesis holds.
\end{proof}

Proposition \ref{comparoL1} affirms that the speed of convergence for $t^m$ vary depending on the power $m$ and the interval length $T$ for the $L^1$ norm. By taking $T=1$ and $T=m-1$  it can be seen that the speed of convergence could change as it is shown in table \ref{tabla}.

\begin{table}[ht]
\caption{Different speed of convergence}
\centering
  \begin{tabular}{ | c | c | c | }
    \hline
    m & $\lim\limits_{\beta\rightarrow 0^+}\frac{\left|\left|{^C}{^F}D^{1-\beta} g-g'\right|\right|_{L^1(0,1)}}{\left|\left|{^C}D^{1-\beta} g-g'\right|\right|_{L^1(0,1)}}$ & $\lim\limits_{\beta\rightarrow 0^+}\frac{\left|\left|{^C}{^F}D^{1-\beta} g-g'\right|\right|_{L^1(0,m-1)}}{\left|\left|{^C}D^{1-\beta} g-g'\right|\right|_{L^1(0,m-1)}}$ \\ \hline
    3 & 1.592207522 &  0.8881460240\\ \hline
    4 & 1.991876242 &  0.8179851126\\ \hline
    5 & 2.344504178 &  0.7816816178\\ \hline
    6 & 2.669821563 &  0.7594559202\\ \hline
  \end{tabular}
\label{tabla}
\end{table}

\section{Conclusions}

We have analyzed the order of convergence  of different fractional differential operators to the ordinary derivative, when the order of derivation tends to one, for $L^1$ and $L^\infty$ norms. We proved that the derivatives have a similar order of convergence for both norms (in fact the order is a number $r$ in the interval $(0,1)$). By the contrast, the error estimate for the RL operator did not, in general,  converged to zero for the  $L^1$ and $L^\infty$ norms. Finally, we tried to  compared the speed of convergence for the C and  CF derivatives, concluding that, in general, neither of them is faster than the other.

\section{Acknowledgements}

\noindent The present work has been sponsored by the Projects PIP N$^\circ$ 0275 from CONICET--Univ. Austral, ANPCyT PICTO Austral 2016 N$^\circ 0090$, Austral N$^\circ 006-$INV$00020$ (Rosario, Argentina) and European Unions Horizon 2020 research and innovation programme under the Marie Sklodowska-Curie Grant Agreement N$^\circ$ 823731 CONMECH.


\begin{thebibliography}{10}

\bibitem{AnTeMaKiAta:2016}
J.~Ant\'onio, J.~Tenreiro~Machado, F.~Mainardi, V.~Kiryakova, and
  T.~Atanackovi\'c.
\newblock Round table discussion - fractional calculus: D' ou venons nous? que
  sommes-nous? ou allons nous? (contributions to rt held at icfda 2016).
\newblock {\em Fractional Calculus and Applied Analysis}, 19:1074--1104, 01
  2016.

\bibitem{Bartle}
R.~G. Bartle.
\newblock {\em The Elements of Integration}.
\newblock John Wiley and Sons, Inc., 1966.

\bibitem{BhaPa:2019}
S.~Bhalekar and M.~Patil.
\newblock Can we split fractional derivative while analyzing fractional
  differential equations?
\newblock {\em Communications in Nonlinear Science and Numerical Simulation},
  76:12 -- 24, 2019.

\bibitem{CaFa:2015}
M.~Caputo and M.~Fabrizio.
\newblock A new definition of fractional derivative without singular kernel.
\newblock {\em Progress in Fractional Differenciation and Applications},
  1(2):1--13, 2015.

\bibitem{Diethelm}
K.~Diethelm.
\newblock {\em The {A}nalysis of {F}ractional {D}ifferential {E}quations: An
  application oriented exposition using differential operators of {C}aputo
  type}.
\newblock Springer Science \& Business Media, 2010.

\bibitem{Giu:2018}
A.~Giusti.
\newblock A comment on some new definitions of fractional derivative.
\newblock {\em Nonlinear Dynamics}, 93(3):1757--1763, 08 2018.

\bibitem{Hi:2019}
R.~Hilfer.
\newblock {\em Mathematical and physical interpretations of fractional
  derivatives and integrals}, pages 47--86.
\newblock 02 2019.

\bibitem{HiLu:2019}
R.~Hilfer and Y.~Luchko.
\newblock Desiderata for fractional derivatives and integrals.
\newblock {\em Mathematics}, 7:149, 02 2019.

\bibitem{Kilbas}
A.~Kilbas, H.~Srivastava, and J.~Trujillo.
\newblock {\em Theory and {A}pplications of {F}ractional {D}ifferential
  {E}quations, Vol. 204 of {N}orth-{H}olland {M}athematics {S}tudies}.
\newblock Elsevier, 2006.

\bibitem{FM-Libro}
F.~Mainardi.
\newblock {\em Fractional {C}alculus and {W}aves in {L}inear
  {V}iscoelasticity}.
\newblock Imperial Collage Press, 2010.

\bibitem{MK:2000}
R.~Metzler and J.~Klafter.
\newblock The random walk's guide to anomalous diffusion: a fractional dynamics
  approach.
\newblock {\em Physics reports}, 339:1--77, 2000.

\bibitem{OtTr:2014}
M.~Ortigueira and J.~Tenreiro~Machado.
\newblock What is a fractional derivative?
\newblock {\em Journal of Computational Physics}, 293, 07 2014.

\bibitem{OrMa:2019}
M.~Ortigueira and J.~Tenreiro~Machado.
\newblock Fractional derivatives: The perspective of system theory.
\newblock {\em Mathematics}, 7:150, 02 2019.

\bibitem{Podlubny}
I.~Podlubny.
\newblock {\em Fractional {D}ifferential {E}quations}.
\newblock Vol. 198 of Mathematics in Science and Engineering, Academic Press,
  1999.

\bibitem{Povstenko}
Y.~Povstenko.
\newblock {\em Linear {F}ractional {D}iffusion--wave {E}quation for
  {S}cientists and {E}ngineers}.
\newblock Springer, 2015.

\bibitem{RoDu:2017}
S.~Rogosin and M.~Dubatovskaya.
\newblock Letnikov vs. marchaud: A survey on two prominent constructions of
  fractional derivatives.
\newblock {\em Mathematics}, 6:3, 12 2017.

\bibitem{RoTaVe:2019}
S.~Roscani, D.~A. Tarzia, and L.~Venturato.
\newblock Global solution to a nonlinear fractional diffusion equation for the
  caputo-fabrizio derivative.
\newblock {\em Progress in Fractional Differenciation and Applications},
  5(4):1--13, 2019.

\bibitem{TeMaOl:2019}
G~Sales~Teodoro, J.~Tenreiro~Machado, and E.~Capelas~de Oliveira.
\newblock A review of definitions of fractional derivatives and other
  operators.
\newblock {\em Journal of Computational Physics}, 388, 03 2019.

\bibitem{Samko}
S.~G. Samko, A.~A. Kilbas, and O.~I. Marichev.
\newblock {\em Fractional {I}ntegrals and {D}erivatives--{T}heory and
  {A}pplications}.
\newblock Gordon and Breach, 1993.

\end{thebibliography}
\end{document}